\numberwithin{equation}{section}
\let\al=\alpha
\let\f=\frac
\let\pa=\partial
\def\R{\mathbf R}
\def\cK{\mathcal K}
\newcommand{\beq}{\begin{equation}}
\newcommand{\eeq}{\end{equation}}
\newcommand{\ben}{\begin{eqnarray}}
\newcommand{\een}{\end{eqnarray}}
\newcommand{\beno}{\begin{eqnarray*}}
\newcommand{\eeno}{\end{eqnarray*}}
\newtheorem{theorem}{Theorem}[section]
\newtheorem{lemma}[theorem]{Lemma}
\newtheorem{proposition}[theorem]{Proposition}
\newtheorem{corol}[theorem]{Corollary}
\newtheorem{Theorem}{Theorem}[section]
\begin{document}

\title[Global regularity of the Steady Prandtl equation]
{Global $C^\infty$ regularity of the Steady Prandtl equation}

\author{Yue Wang}
\address{ School of Mathematical Sciences, Capital Normal University, Beijing 100048, China
}
\email{yuewang37@pku.edu.cn}

\author{Zhifei Zhang}
\address{School of Mathematical Sciences, Peking University, 100871, Beijing, P. R. China}
\email{zfzhang@math.pku.edu.cn}

\begin{abstract}
In this paper, we prove the global $C^\infty$ regularity of the Oleinik's solution
for the steady Prandtl equation with favorable pressure.

\end{abstract}

\date{\today}

\maketitle

\section{Introduction}

In this paper, we study the steady Prandtl equation:
\begin{equation}\label{eq:sPrandtl}
  \left\{
  \begin{aligned}
    &u\pa_x u +v\pa_{y}u-\pa_{y}^2u=-\frac{dp}{dx}(x),\quad x\ge 0,\, y\ge 0,\\
    &\pa_xu+\pa_y v=0,\\
    &u|_{y=0}=v|_{y=0}=0\quad\mbox{and}\quad \displaystyle\lim_{y\to+\infty} u(x,y)=U(x),
    \end{aligned}
  \right.
\end{equation}
where the outer flow $\big(U(x),p(x)\big)$ satisfies
\begin{align}\label{Bernoullilaw}
U(x)U'(x)+p'(x)=0.
\end{align}
This system derived by Prandtl could be used to describe the behavior of the solution near $y=0$ for the steady Navier-Stokes equations  when the viscosity coefficient is small.

The existence and regularity of  solution for  the system \eqref{eq:sPrandtl}  was proved by Oleinik \cite{Olei}  for a class of positive data $u_0(y)$ prescribed at $x=0$. Let us make it precise. We denote by $\cK$ a class of functions, which satisfy
\begin{align*}
 &u\in C_b^{2,\alpha}\big([0,+\infty)\big)(\al>0),\quad u(0)=0,\,\,u_y(0)>0,\,\,u(y)>0\,\, \text{for}\, \,y\in(0,+\infty),\\
 &\displaystyle\lim_{y\to+\infty} u(y)=U(0)>0,\quad u_{yy}(y)-\frac{dp}{dx}(0)=O(y^2).
\end{align*}
Oleinik proved the following classical result(see Theorem 2.1.1 in \cite{Olei}).

\begin{Theorem}\label{thm:olei}
If $u_0\in \cK$ and $\frac{dp}{dx}(x)$ is smooth, then there exists $X>0$ such that the steady Prandtl equation \eqref{eq:sPrandtl} admits a solution $u\in C^1([0,X)\times\R_+)$ with the following properties:

\begin{itemize}
\item[1.] regularity: $u$ is bounded and continuous in $[0,X]\times \R_+; \,u_{y},u_{yy}$ are bounded and continuous in $[0,X)\times\R_+$; and $v,v_{y},u_{x}$ are locally bounded and continuous in $[0,X)\times\R_+.$
\item[2.] non-degeneracy: $u(x,y)>0$ in $[0,X)\times (0,+\infty)$ and for all $\bar{x}<X,$ there exists $y_0>0,m>0$ so that
\beno
\partial_yu(x,y)\geq m\quad \text{in}\,\, [0,\bar{x}]\times[0,y_0].
\eeno
\item[3.] global existence: if $p'(x)\le 0$, then the solution is global in $x$.\end{itemize}
\end{Theorem}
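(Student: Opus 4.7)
The plan is to follow Oleinik's original approach based on the Crocco transformation. Introduce new coordinates $(\tau,\eta)=(x, u(x,y))$, so that $\eta\in(0,U(\tau))$, and a new unknown $w(\tau,\eta):=u_y(x,y)$. Since $u_0\in\cK$ guarantees $u_0'(0)>0$ and $u(x,\cdot)$ is strictly increasing in $y$ initially, this change of variables is well defined near the initial slice. A direct computation (differentiating the momentum equation in $y$ and using the divergence-free condition to eliminate $v$ and $u_x$) converts \eqref{eq:sPrandtl} into the single quasilinear degenerate parabolic equation
\begin{equation*}
\eta\, w_\tau - w^2 w_{\eta\eta} - p'(\tau)\, w_\eta = 0,\quad 0<\eta<U(\tau),
\end{equation*}
with the nonlinear Robin boundary condition $(w\,w_\eta)|_{\eta=0}=p'(\tau)$ (the momentum equation at the wall) and the Dirichlet condition $w|_{\eta=U(\tau)}=0$ (from far-field matching), together with initial datum $w(0,u_0(y))=u_0'(y)$.

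The bulk of the proof is then to construct a smooth positive $w$ on some interval $[0,X)$ and invert the Crocco change of variables. I would approximate the doubly degenerate problem by strictly parabolic ones (adding a small viscosity near $\eta=U(\tau)$ and regularizing the coefficient $\eta$ of $w_\tau$ near $\eta=0$), solve each approximate problem by classical Schauder theory, and derive a priori estimates uniform in the regularization parameter. The key estimates are: an upper bound on $w$ by the parabolic maximum principle; a quantitative lower bound $w(\tau,\eta)\ge m>0$ on compact subsets of $\{\eta<U(\tau)\}$, obtained via explicit barrier subsolutions; and H\"older bounds on $w_\eta, w_\tau$ from interior parabolic estimates. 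Passing to the limit and inverting the transformation recovers $u,v,u_y,u_{yy},u_x,v_y$ with the claimed regularity on $[0,X)\times\R_+$. The non-degeneracy $\pa_y u\ge m$ near the wall is the lower bound on $w$ at $\eta=0$ translated back into $(x,y)$ coordinates, while the built-in compatibility condition $u_{yy}(y)-p'(0)=O(y^2)$ in $\cK$ is precisely what is needed for $w(0,\cdot)$ to match the wall boundary condition in a $C^2$ fashion.

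For the global statement under $p'(x)\le 0$, the essential point is that boundary-layer separation---characterized by $\pa_y u|_{y=0}=w|_{\eta=0}\to 0$---is excluded. Indeed, the boundary condition $(w\,w_\eta)|_{\eta=0}=p'\le 0$ means that at any wall minimum of $w$ one has $w_\eta|_{\eta=0}\le 0$ unless $w=0$ there, so a Hopf-type comparison argument with a suitable subsolution of the Crocco equation yields a uniform-in-$\tau$ lower bound $w(\tau,0)\ge c>0$. The remaining a priori bounds likewise stay non-degenerate for all $\tau>0$, so the local solution extends globally and $X$ can be taken to be $+\infty$.

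The main obstacle is the double degeneracy of the Crocco equation (vanishing of $w$ at $\eta=U(\tau)$ and vanishing of the coefficient of $w_\tau$ at $\eta=0$) combined with the nonlinear Robin-type condition $w\,w_\eta=p'$, which is not covered by classical parabolic theory. Handling this forces a delicate choice of regularization scheme, careful construction of barrier functions near both degenerate boundaries, and precise matching of the compatibility conditions imposed by $\cK$; this is the technical core of Oleinik's argument and is what pins down the admissible class of initial data.
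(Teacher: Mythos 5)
You are proving a statement that the paper itself does not prove: Theorem \ref{thm:olei} is Oleinik's Theorem 2.1.1, quoted from \cite{Olei}, and the only indication of its proof given here is that it rests on the von Mises transformation $\psi=\int_0^y u\,dz$, $w=u^2$, which turns \eqref{eq:sPrandtl} into $\pa_x w-\sqrt{w}\,\pa_\psi^2 w=-2p'$ on the fixed domain $\psi>0$ with the linear conditions $w(x,0)=0$, $w\to U^2$. Your route through the Crocco variables $(\tau,\eta)=(x,u)$, $w=u_y$ is genuinely different, and your derivation of $\eta w_\tau-w^2w_{\eta\eta}-p'w_\eta=0$ with $(ww_\eta)|_{\eta=0}=p'$ and $w|_{\eta=U(\tau)}=0$ is correct as far as it goes. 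But there is a genuine gap at the very first step: the Crocco change of variables requires $u(x,\cdot)$ to be strictly increasing on all of $[0,+\infty)$ with range contained in $[0,U(x))$, and the class $\cK$ guarantees neither. $\cK$ only imposes $u_0(y)>0$ for $y>0$ and $u_0'(0)>0$; the data may be non-monotone away from the wall and may overshoot $U(0)$, and these features are not removed instantaneously for $x>0$ (the quantity $u_y$ satisfies $u\pa_xu_y+v\pa_yu_y-\pa_y^2u_y=0$, and nothing forces it to become positive at once). For such data $\eta\mapsto w(\tau,\eta)$ is not single-valued, the strip $0<\eta<U(\tau)$ is not the right domain, and the Dirichlet condition at $\eta=U(\tau)$ has no meaning. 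This is exactly why the steady theory for the class $\cK$ (Oleinik's, and the present paper's regularity analysis) is run in von Mises variables, which only need $u>0$ for $y>0$ --- precisely what $\cK$ provides and what the maximum principle propagates. As written, your scheme would prove the theorem only for the subclass of monotone data with $0\le u_0\le U(0)$, not for all of $\cK$.

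Two further points in the global-in-$x$ part. Continuation only requires non-degeneracy on each compact interval $[0,\bar x]$, with constants allowed to depend on $\bar x$ (the paper explicitly leaves the large-$x$ behavior open), so your claim of a lower bound $w(\tau,0)\ge c>0$ uniform in $\tau\in[0,+\infty)$ is both more than you need and more than your sketch delivers. Moreover, the Hopf-type comparison at the wall must be localized: $w$ vanishes identically on $\{\eta=U(\tau)\}$, so the global minimum of $w$ over the closed domain is $0$ and is attained there; the argument has to be carried out with a barrier adapted to both degeneracies (the vanishing of $w$ at $\eta=U(\tau)$ and of the coefficient $\eta$ at $\eta=0$), which is precisely the technical core you defer rather than supply.
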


Then the following problems are natural and important:

\begin{itemize}

\item {\bf Problem 1.} Boundary layer separation in the case of unfavorable pressure, i.e., $p'(x)>0$.

\item{\bf Problem 2.}  Global $C^\infty$ regularity of Oleinik's solution in the case of favorable pressure, i.e., $p'(x)\le 0$.

\item{\bf Problem 3.} Vanishing viscosity limit of the steady Navier-Stokes equations.

\end{itemize}

For Problem 1,  Dalibard and Masmoudi \cite{DM} proved  the boundary layer separation for a class of special data and $p'(x)=1$, and show that the solution behaves near the separation:
\beno
\pa_yu(x,0)\sim (x^*-x)^\f12,\quad x<x^*.
\eeno
Shen, Wang and Zhang \cite{SWZ} proved the boundary layer separation for the Oleinik's type data in $\cK$ and $p'(x)>0$, and
studied the local behavior of the solution near the separation point. See also \cite{E} for an unpublished result.

For Problem 3, there are some important progress on the stability  for some special boundary layer flows such as the Blasius flow and shear flow \cite{Guo1, GM}.

Global $C^\infty$ regularity of Oleinik's solution is also a long-standing problem. The main challenge is to study the regularity of the solution for a degenerate parabolic equation. Indeed, Oleinik's proof is based on the Von Mises transformation:
\begin{align}\label{eq:VM}
 \psi(x,y)=\int_0^yu(x,z)dz,\quad w=u^2.
\end{align}
A direct calculation shows that
  \begin{align}\label{PvM}
  \begin{split}
& 2\partial_y u =\partial_\psi w,\quad2\partial^2_{y} u=\sqrt{w}\partial^2_{\psi}w.
 \end{split}
\end{align}
Then the new unknown $w(x,\psi)$ satisfies
  \begin{align}\label{eq:sPrandtl-VM}
 \pa_xw-\sqrt{w}\partial^2_{\psi}w=-2\frac{dp}{dx}\quad\text{in}\quad[0,X)\times \R_+,
 \end{align}
together with
\ben\label{eq:sPrandtl-bc}
w(0,\psi)=w_0(\psi)=u_0(y)^2,\quad w(x,0)=0,\quad \displaystyle\lim_{\psi\to+\infty}w(x,\psi)=U(x)^2.
\een
Now  \eqref{eq:sPrandtl-VM} is a degenerate parabolic equation due to $w=0$ at $\psi=0$. 

Recently, Guo and  Iyer \cite{Guo2} proved the higher regularity of the  solution in a local time, i.e., $0<x<X\ll1$.
The goal of this paper is to give an affirmative answer to {\bf Problem 2.} The following is our main result.

\begin{Theorem}\label{thm:reg}
Let $u$ be a global solution to \eqref{eq:sPrandtl} constructed in Theorem \ref{thm:olei} with $u_0\in \cK$ and $\frac{d p(x)}{dx}\leq 0$ smooth. For any positive integers $m,k$ and any positive constants $X, Y$ with $\epsilon<X$,
 there exists a positive constant $C$ depending only on $\epsilon, X, Y, u_0, p, k, m$ so that
 \beno
|\pa_x^{k}\pa_y^{m} u(x,y)|\le C\quad \text{for}\quad (x,y)\in [\epsilon, X]\times[0, Y].
\eeno
\end{Theorem}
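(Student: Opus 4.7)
The plan is to work in the Von Mises variables \eqref{eq:VM}--\eqref{eq:sPrandtl-VM}. The Oleinik non-degeneracy and the favorable-pressure maximum principle produce a lower bound $w(x,\psi)\ge c(X)\psi$ on $[\epsilon,X]\times[0,\Psi]$, so \eqref{eq:sPrandtl-VM} is uniformly parabolic on $[\epsilon,X]\times[\delta,\Psi]$ for every $\delta>0$. A standard Schauder bootstrap combined with parabolic smoothing in $x$ then yields $C^\infty$ bounds away from $\psi=0$; the only real difficulty is propagating the regularity up to the boundary $\psi=0$, equivalently $y=0$.

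For this I would construct, for every $N\ge 2$, a half-integer asymptotic expansion
\beno
w(x,\psi)=\sum_{j=2}^{N}a_j(x)\,\psi^{j/2}+R_N(x,\psi),\qquad a_2(x)=2u_y(x,0)>0,
\eeno
whose coefficients $a_j\in C^\infty([\epsilon,X])$ are uniquely determined by inserting the ansatz into \eqref{eq:sPrandtl-VM} and matching powers of $\psi^{1/2}$ (for instance $a_3=8p'/(3\sqrt{a_2})$), together with the sharp remainder estimate $|\pa_x^k R_N(x,\psi)|\le C_{k,N}\psi^{(N+1)/2}$. Since $\psi\sim\tfrac12 a_2(x)y^2$ near $y=0$, substituting this expansion into $u=\sqrt{w}$ reproduces the ordinary integer Taylor expansion of $u$ in $y$ with smooth $x$-dependence, which is exactly the boundary control $|\pa_x^k\pa_y^m u|\le C$ required in Theorem \ref{thm:reg}; the interior Schauder bounds from the first paragraph then finish the proof on $[\epsilon,X]\times[0,Y]$.

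The remainder $R_N$ solves a linear degenerate parabolic equation of the form $\pa_xR_N-\sqrt{w}\,\pa_\psi^2R_N=F_N$ with $R_N|_{\psi=0}=0$ and a source $F_N$ that decays like an appropriate half-integer power of $\psi$ at the boundary. The bound $|R_N|\le C\psi^{(N+1)/2}$ is then obtained by the maximum principle after rescaling $R_N$ by $\psi^{(N+1)/2}$, which converts the problem into a non-degenerate one; the favorable pressure $p'\le 0$ enters repeatedly to supply the sign conditions needed to close the comparison. Differentiating the remainder equation in $x$ extends the estimate to all $\pa_x^k R_N$. The hard part will be the base case $N=2$, which amounts to showing that the skin friction $a_2(x)=2u_y(x,0)$ is smooth in $x\in[\epsilon,X]$; I would handle this by applying parabolic regularity to the boundary trace of $w_\psi$, using the Oleinik lower bound to keep it away from zero and the favorable pressure to control the boundary source term produced by taking $\psi=0$ in successive differentiations of \eqref{eq:sPrandtl-VM}.
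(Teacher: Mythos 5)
Your route (a half-power expansion of $w$ in $\psi^{1/2}$ with smooth coefficients plus a comparison-principle remainder bound) is genuinely different from the paper, which never constructs an expansion: it proves, by induction on $k$, the weighted bounds $|\pa_x^k w|\le C\psi$, $|\pa_\psi\pa_x^k w|\le C$, $|\pa_\psi^2\pa_x^k w|\le C\psi^{-1}$ using maximum principles for difference quotients with the barriers $M\psi\ln\psi$ and $A_1\psi-A_2\psi^{3/2-\beta}$, combined with interior Schauder estimates on the anisotropic boxes $[x_3-\psi_3^{3/2},x_3]\times[\tfrac12\psi_3,\tfrac32\psi_3]$ rescaled to unit size, and only then transfers the bounds back to $(x,y)$. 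As written, however, your plan has two gaps that I believe are fatal. First, the smoothness of the coefficients is not a ``base case'' that can be outsourced to ``parabolic regularity applied to the boundary trace of $w_\psi$'': the equation \eqref{eq:sPrandtl-VM} degenerates exactly at $\psi=0$, so no boundary parabolic regularity theory applies there, and the trace $a_2(x)=\pa_\psi w(x,0)$ satisfies no closed equation; to derive one you must differentiate the degenerate equation and let $\psi\to0$, which already presupposes continuity up to $\psi=0$ of the higher derivatives you are trying to control. Moreover each coefficient $a_j$ with $j\ge4$ involves $x$-derivatives of $a_2,a_3$, so ``all $a_j\in C^\infty([\epsilon,X])$ with uniform bounds'' is essentially equivalent to the theorem itself rather than an ingredient of its proof.

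Second, the remainder estimate does not close by the comparison you describe. For $\gamma=(N+1)/2>1$ one has $-\sqrt{w}\,\pa_\psi^2(\psi^\gamma)=-\gamma(\gamma-1)\sqrt{w}\,\psi^{\gamma-2}<0$, so $\psi^\gamma$ is a subsolution, not a supersolution: the barrier produces a term of the wrong sign and cannot absorb the source $F_N$. Equivalently, dividing $R_N$ by $\psi^\gamma$ generates a singular zeroth-order coefficient $-\gamma(\gamma-1)\sqrt{w}/\psi^2$, again with the bad sign at an interior positive maximum, so the rescaled problem is not a benign non-degenerate one. This is precisely why the paper's barriers have the special forms $A_1\psi-A_2\psi^{1+\beta}$, $M\psi\ln\psi$, $A_1\psi-A_2\psi^{3/2-\beta}$: their $\psi$-concavity/convexity produces a good negative term of size $\psi^{\beta-1/2}$, $\psi^{-1/2}$, $\psi^{-\beta}$ exactly matching the error, and this mechanism only yields bounds of order $\psi$ (whence the subsequent rescaled Schauder step), never of order $\psi^{(N+1)/2}$. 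Nor does favorable pressure rescue the sign: in the paper $dp/dx\le 0$ is used at a single point, to control the term $\frac{\pa_x^{k-1}w}{w}\frac{dp}{dx}$ at a positive maximum, and it plays no role in the sign structure of the barriers. So both pillars of your proposal --- smoothness of the coefficients and the comparison argument for $R_N$ --- require genuinely new arguments before the proof could be considered complete.
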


For $y\in [\delta, +\infty)$ and $\delta>0$, \eqref{eq:sPrandtl-VM} is an uniform parabolic equation due to $w(x,y)>0$ for $y\ge \delta$ and \eqref{Bernoullilaw} with favorable pressure and $U(0)>0$.
Thus, in the domain $[\epsilon, +\infty)\times[\delta, +\infty)$, the $C^\infty$ regularity of the solution is a direct consequence of classical parabolic regularity theory \cite{Kry}. Therefore, we will focus on the domain $[\epsilon, X]\times[0, \delta]$ for some small $\delta>0$.

Our result does not give the uniform bounds of $|\pa_x^k\pa_y^mu(x,y)|$ in $x,y$. In particular,  the large time behavior of the solution is a very interesting problem(see also \cite{Iyer}),  which will be left to our future work. In the unsteady case, Xin and Zhang \cite{XZ} proved the global existence of weak solution under the favorable pressure. However, the global $C^\infty$ regularity up to the boundary 
remains open.

\section{Lower order regularity estimates}

In the sequel, we assume that $u$ is a global solution to \eqref{eq:sPrandtl} constructed in Theorem \ref{thm:olei} with $u_0\in \cK$ and $\frac{d p(x)}{dx}\leq 0$ smooth.

First of all, we improve the growth estimate of $|\pa_x w|.$

\begin{lemma}\label{lem:growth}
Let $0<\epsilon<X$, there exists $\delta_{1}>0$  so that
$$|\pa_x w(x,\psi)|<C\psi\quad \text{for} \quad (x,\psi)\in [\epsilon,X]\times[0,\delta_1].$$
Here $C$ is a constant independent of $\psi$.

\end{lemma}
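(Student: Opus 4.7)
\emph{Plan.} The heuristic is the asymptotic expansion $w(x,\psi) = 2a(x)\psi + O(\psi^{3/2})$ near $\psi = 0$, where $a(x) := \pa_y u(x,0)$ is bounded below by $m > 0$ via the non-degeneracy in Theorem \ref{thm:olei}; differentiating in $x$ gives $\pa_x w \sim 2a'(x)\psi = O(\psi)$, matching the claim. To rigorize, set $q := \pa_x w$. Differentiating \eqref{eq:sPrandtl-VM} in $x$ and using \eqref{eq:sPrandtl-VM} itself to substitute $\pa_\psi^2 w = (q + 2p'(x))/\sqrt{w}$,
\begin{equation*}
\pa_x q - \sqrt{w}\,\pa_\psi^2 q - \frac{q + 2p'(x)}{2w}\,q = -2p''(x), \qquad q(x,0) \equiv 0,
\end{equation*}
where $q(x,0)\equiv 0$ comes from $w(x,0)\equiv 0$. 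Oleinik's local bound on $u_x$, transferred through the Von Mises change of variables \eqref{eq:VM}, gives $|q|\le K$ on $[\epsilon/2,X]\times[0,\delta_0]$, and non-degeneracy gives $w(x,\psi) \ge m\psi$ on a rectangle $R := [\epsilon/2, X]\times[0,\delta_1]$.

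The estimate is then obtained by comparing $\pm q$ with a barrier
\[
\phi(x,\psi) = M\psi + N\chi(x)\sqrt{\psi}
\]
on $R$, where $\chi$ is a smooth cutoff with $\chi(\epsilon/2) = 1$ and $\chi \equiv 0$ on $[\epsilon, X]$. The $M\psi$-piece is the ``target'' linear-in-$\psi$ majorant; the $N\chi\sqrt{\psi}$-piece compensates the weaker initial-time behavior, because a direct Taylor expansion of $w_0$ around $y = 0$ shows $q(0,\psi) = O(\sqrt{\psi})$ generically, not $O(\psi)$, so some smoothing in $x$ is necessary before the $\sqrt{\psi}$ can be replaced by $\psi$. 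On the parabolic boundary of $R$: $q = \phi = 0$ on $\psi = 0$; on $\psi = \delta_1$ and on $x = \epsilon/2$, choose $M, N$ large so that $\phi \ge |q|$. In the interior, with $L[\cdot] := \pa_x[\cdot] - \sqrt{w}\,\pa_\psi^2[\cdot] - \frac{q+2p'(x)}{2w}[\cdot]$, compute $L\phi$ and check $L(\pm q - \phi) \le 0$; the key inequality is that $w \ge m\psi$ turns the singular factor into a bounded one when paired with $\psi$:
\[
\Bigl|\,\frac{q+2p'(x)}{2w}\,\psi\,\Bigr| \le \frac{K + 2\|p'\|_\infty}{2m}.
\]
The degenerate parabolic maximum principle (applicable because $\sqrt{w}$ is continuous and nonnegative and the Dirichlet condition on $\psi = 0$ is satisfied) then yields $\pm q \le \phi$ on $R$. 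On $[\epsilon, X] \times [0, \delta_1]$ the cutoff $\chi$ vanishes, so $\phi = M\psi$ there, and $|\pa_x w(x,\psi)| \le M\psi$ follows.

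\emph{Main obstacle.} Two interacting sources of degeneracy: (a) the linearized equation for $q$ has a zeroth-order coefficient $(q + 2p'(x))/(2w)$ that is singular of order $1/\psi$ at $\psi = 0$, so no standard parabolic maximum principle applies directly; (b) at the initial time $x = \epsilon/2$ the function $q$ is only $O(\sqrt{\psi})$ rather than $O(\psi)$, so parabolic smoothing is needed to upgrade from $\sqrt{\psi}$ to $\psi$. Both are resolved by the non-degeneracy bound $w \ge m\psi$ together with the choice of barrier: the $\psi$-scaling in $M\psi$ converts the singular coefficient into a bounded multiplicative factor, and the cutoff piece $N\chi(x)\sqrt{\psi}$ handles the initial-time discrepancy while vanishing by $x = \epsilon$. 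The delicate point is calibrating $M$, $N$, and $\chi$ so that the barrier is simultaneously a supersolution of the linearized equation and dominates $|q|$ on the parabolic boundary.
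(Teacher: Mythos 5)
Your overall strategy---differentiate \eqref{eq:sPrandtl-VM} in $x$, use the non-degeneracy $m\psi\le w\le M\psi$, localize in $x$ by a cutoff, and compare $\pm\pa_x w$ with an explicit barrier via a maximum principle---is the same as the paper's, and your linearized equation for $q=\pa_x w$ is the correct one. The gap is in the interior supersolution inequality, and it occurs exactly on the region $[\epsilon,X]$ where the conclusion is claimed. Your only a priori input on $q$ is a uniform bound $|q|\le K$, whereas the paper's proof hinges on Oleinik's refined estimate \eqref{xd}, $|\pa_x w|\le C\psi^{\frac12+\alpha}$ with $\alpha\in(0,\frac12)$ strictly positive. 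This decay is indispensable: the terms you fold into the zeroth-order coefficient are, viewed as forcing, $\frac{q^2}{2w}+\frac{q\,p'}{w}$, and with only $|q|\le K$ and $w\ge m\psi$ these are of size $\psi^{-1}$. A barrier can only generate a good term through $-\sqrt{w}\,\pa_\psi^2\varphi$, and to beat a $\psi^{-1}$ forcing this forces $\varphi\gtrsim\sqrt{\psi}$, so the conclusion could at best be $|q|\lesssim\sqrt\psi$, never $|q|\lesssim\psi$. With \eqref{xd} the forcing is $O(\psi^{\alpha-\frac12})$, which is exactly what makes a linear-in-$\psi$ bound reachable. (The same weakness breaks your boundary comparison at $x=\epsilon/2$: with only $|q|\le K$, $M\psi+N\sqrt\psi$ does not dominate $|q|$ for small $\psi$.)

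Even granting \eqref{xd}, your barrier does not close: on $[\epsilon,X]$ it is exactly $M\psi$ (the $\sqrt\psi$ piece is switched off there), so $\pa_x\phi=\pa_\psi^2\phi=0$, and the only term left to fight the forcing is $-c\,M\psi$ with $c=\frac{q+2p'}{2w}$. Your ``key inequality'' only says this term is \emph{bounded}; its bad part $-\frac{q}{2w}M\psi$ scales linearly in $M$, so enlarging $M$ does not help. Concretely, at an interior positive maximum $p_0$ of $q-\phi$ one has $q(p_0)>\phi(p_0)>0$, and for $p\equiv\mathrm{const}$ the required inequality reduces to $\frac{q^2}{2w}\le 0$ at $p_0$, which is false; moreover the maximum principle you invoke for $L$ needs a sign or bound on $c$, while $c$ is unbounded near $\psi=0$ and of indefinite sign precisely at such a point. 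The paper repairs exactly this: it keeps the clean operator $\pa_x-\sqrt{w}\,\pa_\psi^2$, bounds all problematic terms as forcing of size $C\psi^{\alpha-\frac12}$ via \eqref{xd}, multiplies $\pa_x w$ (not part of the barrier) by the cutoff, and uses $\varphi=A_1\psi-A_2\psi^{1+\alpha}$, whose concave correction is active on all of $[0,X]$ and produces the good term $A_2\sqrt{m}\,\alpha(1+\alpha)\psi^{\alpha-\frac12}$ dominating the forcing for $A_2$ large, while still $\varphi\le A_1\psi$ gives the linear bound. You need both of these ingredients (the $\psi^{\frac12+\alpha}$ input and a strictly concave correction active on $[\epsilon,X]$) to make your scheme work.
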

\begin{proof}
Thanks to Theorem 2.1.14 in \cite{Olei}, it holds in $ [0, X]\times [0,\delta_1]$ that
\begin{align}\label{xd}\begin{split}
&|\partial_x w|\leq C\psi^{\frac{1}{2}+\alpha},\quad 0<m<\pa_{\psi} w<M\Rightarrow  m\psi< w<M\psi,\end{split}
\end{align}
for some $\alpha\in(0,\frac{1}{2}),$ and positive constants $m, M$ depending only on $X,u_0,p$.
Without loss of generality, we may assume $\delta_1<1.$

Take a smooth cutoff function $0\leq\zeta(x)\leq 1$ so that $\zeta(x)\equiv 1$ in $[\epsilon, X]$ and $\zeta(x)\equiv 0$ in $[0,\frac{\epsilon}{2}]$. Since $w$ is smooth in the interior, we have, in $(0,X]\times (0,\delta_1],$
 \begin{align}\nonumber
    \pa_x \pa_x w-\sqrt{w}\pa_\psi^2 \pa_x w=\frac{\pa_x w}{2\sqrt{w}}\pa_\psi^2 w-2\frac{d^2p}{dx^2}=\frac{(\pa_x w)^2}{2w}+2\frac{\pa_x w\frac{dp}{dx}}{2w}-2\frac{d^2p}{dx^2},
 \end{align}
which gives
 \begin{align*}
 \begin{split}
    \pa_x\big[\pa_x w\zeta(x)\big]-\sqrt{w}\pa_\psi^2\big[\pa_x w\zeta(x)\big]=&\frac{(\pa_x w)^2}{2w}\zeta(x)+2\frac{\pa_x w\frac{dp}{dx}}{2w}\zeta(x)-2\frac{d^2p}{dx^2}\zeta(x)\\&+\pa_x w\pa_x\zeta(x):=I.\end{split}
 \end{align*}
 From \eqref{xd}, we know that
 \begin{align}\label{I}
|I|\leq C\psi^{2\alpha}+C\psi^{\alpha-\frac{1}{2}}+C+C\psi^{\frac{1}{2}+\alpha}
 \leq C\psi^{\alpha-\frac{1}{2}}.
 \end{align}

Next we  take $\varphi(\psi)=A_1\psi-A_2\psi^{1+\beta}$ with constants $A_1,  A_2$ big and $\beta\in(0,1)$ to be determined.
Then by \eqref{xd} and \eqref{I}, we get
  \begin{align*}
  \begin{split}
    \pa_x [\pa_x w\zeta(x)-\varphi]-\sqrt{w}\pa_\psi^2 [\pa_x w\zeta(x)-\varphi]\leq&|I|-A_2\sqrt{w}\beta(1+\beta)\psi^{\beta-1}\\
    \leq &C\psi^{\alpha-\frac{1}{2}}-A_2\sqrt{m}\beta(1+\beta)\psi^{\beta-\frac{1}{2}}.
    \end{split}
 \end{align*}
 Taking $\beta={\alpha}$ and  $A_2$ large enough, we conclude
 $$
 \pa_x\big[\pa_x w\zeta(x)-\varphi\big]-\sqrt{w}\pa_\psi^2\big[\pa_x w\zeta(x)-\varphi\big]<0\quad in \quad (0,X]\times (0,\delta_1).
 $$
 We get by \eqref{xd} that for $x\le X$,
\beno
&&(\pa_x w\zeta-\varphi)(x,0)=0,\\
&&(\pa_x w\zeta-\varphi)(x,\delta_1)\leq M\delta_1^{\frac{1}{2}+\alpha}-A_1\delta_1+A_2\delta_1^{1+\beta},
\eeno
Requiring $A_1\geq A_2,$  by the definition of $\zeta(x),$ we have
$$(\pa_x w\zeta-\varphi)(0,\psi)\leq 0\quad\text{in}\quad [0,\delta_1],$$
and take $A_1$ large enough depending on $M,\delta_1, A_2$ so that
$$(\pa_x w\zeta-\varphi)(x,\delta_1)\leq 0.$$
Then the maximum principle ensures that
$$(\pa_x w\zeta-\varphi)(x,\psi)\leq 0\quad \text{in}\quad [0,X]\times [0,\delta_1],$$
which implies
$$\pa_x w(x,\psi)\leq  A_1\psi-A_2\psi^{1+\beta}\le \frac{ A_1}{2}\psi \quad \text{in}\quad [\epsilon,X]\times [0,\delta_1],$$
if $\delta_1$ is chosen suitably small.

The fact that  $-\pa_x w\leq \frac{ A_1}{2}\psi$ in $[\epsilon, X]\times [0,\delta_1]$  could be similarly proved by considering $-\pa_x w\zeta-\varphi$.
\end{proof}

\begin{proposition}\label{prop:lower}
Let $0<\epsilon<X$, there exists $\delta_{2}>0$  so that for $(x,\psi)\in [\epsilon,X]\times (0,\delta_2],$
\begin{align*}
|\pa_\psi\pa_xw(x,\psi)|\le C, \quad |\pa_x^2w(x,\psi)|\le C\psi^{-\f12},\quad
|\pa_\psi^2\pa_xw (x,\psi)|\leq C\psi^{-1}.
\end{align*}
Here $C$ is a constant independent of $\psi$.
\end{proposition}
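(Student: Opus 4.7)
My plan is to tackle the three bounds in the stated order, combining the barrier / maximum-principle technique of Lemma~\ref{lem:growth} at the second-derivative level with one algebraic identity. First I record the identity obtained by differentiating $\sqrt{w}\,\pa_\psi^2 w=\pa_x w+2p'(x)$ in $x$:
\begin{equation*}
\sqrt{w}\,\pa_\psi^2\pa_x w=\pa_x^2 w+2p''(x)-\f{\pa_x w\,(\pa_x w+2p'(x))}{2w}.
\end{equation*}
By Lemma~\ref{lem:growth} and $w\ge m\psi$, the last term on the right is $O(1)$, so once the bound $|\pa_x^2 w|\le C\psi^{-1/2}$ is established, dividing by $\sqrt{w}\ge\sqrt{m\psi}$ immediately yields the third estimate $|\pa_\psi^2\pa_x w|\le C\psi^{-1}$. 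All the real work is therefore concentrated in the first two bounds.

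For the bound on $\pa_\psi\pa_x w$, I exploit the linear equation
\begin{equation*}
\pa_x(\pa_x w)-\sqrt{w}\,\pa_\psi^2(\pa_x w)=\f{(\pa_x w)^2}{2w}+\f{\pa_x w\cdot p'(x)}{w}-2p''(x)
\end{equation*}
already derived in the proof of Lemma~\ref{lem:growth}. With the sharper input $|\pa_x w|\le C\psi$ and $m\psi<w<M\psi$, the right-hand side is now \emph{uniformly bounded} on $[\epsilon,X]\times[0,\delta_1]$, rather than just $O(\psi^{\al-1/2})$ as in the proof of Lemma~\ref{lem:growth}. Rerunning the barrier comparison with a refined barrier of the form $A\psi\pm B\psi^{1+\beta}$, in which the exponent $\beta\in(0,1)$ may now be taken close to $1$ (the now-bounded source permits a larger $\beta$), sandwiches $\pa_x w$ between two such functions tightly enough to extract the boundary Lipschitz bound $|\pa_\psi\pa_x w|\le C$ via Hopf-type comparison at $\psi=0$.

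For the bound on $\pa_x^2 w$, I differentiate the equation just displayed once more in $x$, obtaining a linear parabolic equation for $R:=\pa_x^2 w$ whose right-hand side contains $(\pa_x w/\sqrt{w})\,\pa_\psi^2\pa_x w$ in addition to terms controlled by Lemma~\ref{lem:growth} and by the first estimate. Using the algebraic identity above to eliminate $\pa_\psi^2\pa_x w$, I reduce this to a self-contained equation for $R$ whose explicit source is $O(\psi^{-1/2})$, plus an $R$-linear coefficient that is $O(1)$. I then apply the maximum principle with a barrier $\varphi(\psi)=A\psi^{1/2}-B\psi^{1/2+\beta}$ for suitable small $\beta>0$ and large $A,B$: since $\pa_\psi^2(\psi^{1/2})=-\f14\psi^{-3/2}$ and $\sqrt{w}\ge\sqrt{m\psi}$, the operator $\pa_x-\sqrt{w}\pa_\psi^2$ applied to $A\psi^{1/2}$ produces a term of order $\psi^{-1}$ that strictly dominates both the $O(\psi^{-1/2})$ explicit source and the $O(1)$ self-interaction for $\psi$ small, yielding $|R|\le C\psi^{-1/2}$.

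The main obstacle is the $\pa_x^2 w$-step: after the algebraic substitution the equation for $R$ becomes self-referential, and the barrier must be tuned so that the maximum-principle operator simultaneously beats the singular source and the $R$-linear coefficient. The choice of the exponent $1/2$ in the barrier is precisely dictated by the scaling of $\sqrt{w}\,\pa_\psi^2(\psi^{1/2})$, which matches the expected $\psi^{-1/2}$-blow-up rate of $R$.
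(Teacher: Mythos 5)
Your reduction of the third bound to the second via the identity $\sqrt{w}\,\pa_\psi^2\pa_x w=\pa_x^2w+2p''-\f{\pa_xw(\pa_xw+2p')}{2w}$ is fine, but the two bounds you build it on are not established by your argument. The most serious gap is the first estimate: a two-sided sandwich $|\pa_x w|\le A\psi-B\psi^{1+\beta}$ obtained from a barrier controls the \emph{values} of $\pa_x w$, and at best a one-sided difference quotient at the single boundary point $\psi=0$; it gives no control whatsoever of $\pa_\psi\pa_x w(x,\psi)$ at interior points $\psi>0$, which is what the proposition asserts (a function trapped between $\pm C\psi$ can have wildly oscillating $\psi$-derivative). "Hopf-type comparison at $\psi=0$" cannot substitute for an interior regularity mechanism. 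This is exactly the point where the paper does something different: it fixes $(x_3,\psi_3)$, works on the box $Q=[x_3-\psi_3^{3/2},x_3]\times[\f12\psi_3,\f32\psi_3]$, rescales $x-x_3=\psi_3^{3/2}\tilde x$, $\psi-\psi_3=\psi_3\tilde\psi$ so that the equation becomes uniformly parabolic with $C^\alpha$ coefficients on a unit box, and applies interior Schauder estimates twice (first to $w\psi_3^{-1}$ to get H\"older control of $\pa_{\tilde\psi}^2 w\,\psi_3^{-1}$, then to $f=\pa_x w\,\psi_3^{-1}$); all three stated bounds, including $|\pa_\psi\pa_x w|\le C$ and $|\pa_x^2w|\le C\psi^{-1/2}$, drop out simultaneously after undoing the scaling. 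Some device of this kind (regularity on cylinders scaled to the distance from $\psi=0$) is indispensable and is absent from your proposal.

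The maximum-principle step for $R=\pa_x^2 w$ also does not close as written. First, to run the comparison on $(0,X]\times(0,\delta)$ with a barrier $\varphi=A\psi^{1/2}-B\psi^{1/2+\beta}$ that vanishes at $\psi=0$ you must check $R\zeta-\varphi\le 0$ on the parabolic boundary, in particular $\limsup_{\psi\to0}R\le0$; but nothing is known a priori about $\pa_x^2w$ as $\psi\to0$ (it is expected to blow up like $\psi^{-1/2}$), so the comparison cannot be initialized. (Working with difference quotients of $\pa_x w$, which do vanish at $\psi=0$ thanks to Lemma \ref{lem:growth}, is the standard repair — this is how the paper's Lemma \ref{lem:s1} is set up — but then the source terms involve $\pa_\psi^2\pa_x w$, i.e.\ the very quantity this proposition is supposed to control, so without the interior-estimate step the scheme is circular.) Second, after your substitution the right-hand side is not "an $O(\psi^{-1/2})$ source plus an $O(1)$-coefficient linear term": it contains $\f{p'}{w}R$, whose coefficient is of size $\psi^{-1}$ (harmless only because $p'\le0$ gives it a favorable sign at a positive maximum — a point you never invoke, and the only place favorable pressure enters), and bounded-coefficient terms of the form $O(1)\cdot R$ with no sign; a barrier in $\psi$ alone cannot dominate $O(1)\cdot|R|$, since $R$ is precisely the unknown and is a priori unbounded — one needs an extra device (e.g.\ an exponential weight in $x$) that your argument does not supply. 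In short, the approach is genuinely different from the paper's, but in its present form it neither proves $|\pa_\psi\pa_x w|\le C$ nor validly establishes $|\pa_x^2w|\le C\psi^{-1/2}$.
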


\begin{proof} By Lemma \ref{lem:growth},  there exists $\delta_1>0$ so that
\begin{align}\label{wxQ}
|\pa_x w(x,\psi)|\le C\psi\quad \text{in} \quad \big[\frac{\epsilon}{2},X]\times[0,\delta_1].
\end{align}
For any $(x_3,\psi_3)\in[\epsilon,X]\times (0,\delta_0]$ where $\delta_0=\min\big\{\frac{2}{3}\delta_1,\frac{\epsilon}{2}\big\}$, we denote
\beno
Q=\big\{x_3-\psi_3^{\frac{3}{2}}\leq x\leq x_3\big\}\times \big\{\frac{1}{2}\psi_3\leq \psi\leq\frac{3}{2}\psi_3\big\}.
\eeno
By \eqref{wxQ} and the definition of $\delta_0,$ we have
\begin{align}\label{wxQ1}
|\pa_x w|\le C\psi\quad \text{in} \quad Q.
\end{align}

Now we make a transformation
\begin{align*}
 T: \quad&Q\longrightarrow[-1,0]_{\tilde{x}}\times\big[-\frac{1}{2},\frac{1}{2}\big]_{\tilde{\psi}}:=\widetilde{Q},
 \\&(x,\psi)\mapsto (\tilde{x},\tilde{\psi}),
\end{align*}
where
\begin{align*}
    x-x_3&=\psi_3^{\frac{3}{2}}\tilde{x},\quad \psi-\psi_3=\psi_3\tilde{\psi}.
\end{align*}

Thanks to $\pa_x=\frac{1}{\psi_3^{\frac{3}{2}}}\pa_{\tilde{x}}, \pa_\psi=\frac{1}{\psi_3}\pa_{\tilde{\psi}}$, we find that
\begin{align*}
\pa_{\tilde{x}}(\psi_3^{-1} w)-\frac{\sqrt{w}}{\psi_3^{\frac{1}{2}}}\pa_{\tilde{\psi}}^2( \psi_3^{-1} w)=-2\frac{d}{d\tilde{x}}p\,\psi_3^{-1}
\quad \text{in}\quad \widetilde{Q}.
\end{align*}
It follows from  \eqref{xd} that
\begin{align*}
0<c\leq \frac{\sqrt{w}}{\psi_3^{\frac{1}{2}}}\leq C, \quad |\psi_3^{-1} w|\leq C \quad \text{in} \quad \widetilde{Q},
\end{align*}
and  for any $z_1,z_2\in Q$\big(i.e., $\tilde{z}_1=Tz_1,\tilde{z}_2=Tz_2\in \widetilde{Q}\big)$,
\begin{align*}
    \left|\frac{\sqrt{w}}{\psi_3^{\frac{1}{2}}}(\tilde{z}_1)-\frac{\sqrt{w}}{\psi_3^{\frac{1}{2}}}(\tilde{z}_2)
    \right|&=\frac{1}{\psi_3^{\frac{1}{2}}}\frac{|w(z_1)-w(z_2)|}{\sqrt{w}(z_1)+\sqrt{w}(z_2)}
\\&\leq C \frac{\psi_3|\tilde{z}_1-\tilde{z}_2|}{\psi_3}=C|\tilde{z}_1-\tilde{z}_2|,
\end{align*}
which implies
\begin{align*}
\Big |\frac{\sqrt{w}}{\psi_3^{\frac{1}{2}}}\Big|_ {\mathcal{C}^\alpha(\widetilde{Q})}\leq C
\end{align*}
for any $\alpha\in (0,1)$. On the other hand, since $p$ is smooth, we have
$$
\Big|\frac{d}{d\tilde{x}}p\,\psi_3^{-1} \Big|_{\mathcal{C}^{0,1}([-1,0]_{\tilde{x}})}\leq C.
$$
By standard interior a priori estimates(see Theorem 8.11.1 in \cite{Kry}), we have
\begin{align}\label{psipsialpha}
    |\pa_{\tilde{\psi}}^2w \psi_3^{-1} |_ {\mathcal{C}^\alpha\big([-\frac{1}{2},0]_{\tilde{x}}\times[-\frac{1}{4},\frac{1}{4}]_{\tilde{\psi}}\big)}\leq C.
\end{align}

Let  $f=\pa_{x}w\psi_3^{-1}$, which satisfies
\beno
\pa_{\tilde{x}}f-\frac{\sqrt{w}}{\psi_3^{\frac{1}{2}}}\pa_{\tilde{\psi}}^2f-
\frac{\pa_{\tilde{\psi}}^2w}{2\sqrt{w}\psi_3^{\frac{1}{2}}}f=-2\frac{d}{d\tilde{x}}\frac{d}{dx}p\,\psi_3^{-1}.
\eeno
By \eqref{wxQ1}, we have
\begin{align*}
|f|\leq C \quad \text{in} \quad \widetilde{Q}.
\end{align*}
We write
\begin{align*}
\frac{\pa_{\tilde{\psi}}^2w}{2\sqrt{w}\psi_3^{\frac{1}{2}}}=\pa_{\tilde{\psi}}^2w \psi_3^{-1}\frac{\psi_3^{\frac{1}{2}}}{2\sqrt{w}}.
\end{align*}
Thanks to
\begin{align*}
\Big|\frac{\psi_3^{\frac{1}{2}}}{\sqrt{w}}(\tilde{z}_1)-\frac{\psi_3^{\frac{1}{2}}}{\sqrt{w}}(\tilde{z}_2)
    \Big|=\psi_3^{\frac{1}{2}}\frac{\Big|\frac{w(z_1)-w(z_2)}{w(z_1)w(z_2)}\Big|}{\frac{1}{\sqrt{w}}(z_1)+\frac{1}{\sqrt{w}}(z_2)}
\leq C|\tilde{z}_1-\tilde{z}_2|,
\end{align*}
we have
 \begin{align}\label{psichusqrtw}
 \Big |\frac{\psi_3^{\frac{1}{2}}}{\sqrt{w}}\Big|_ {\mathcal{C}^\alpha(\widetilde{Q})}\leq C,
\end{align}
which along with \eqref{psipsialpha} gives
$$
\Big|\frac{\pa_{\tilde{\psi}}^2w}{2\sqrt{w}\psi_3^{\frac{1}{2}}}\Big|_ {\mathcal{C}^\alpha([-\frac{1}{2},0]_{\tilde{x}}\times[-\frac{1}{4},\frac{1}{4}]_{\tilde{\psi}})}\leq C.$$
As before, since $\frac{dp}{dx}$ is smooth, we have
$$\Big|\frac{d}{d\tilde{x}}\frac{d}{dx}p\,\psi_3^{-1}\Big|_{\mathcal{C}^{0,1}([-1,0]_{\tilde{x}})}\leq C.$$
Then standard interior a priori estimates yield that
\beno
|\pa_{\tilde{x}}f|_ {{L}^\infty([-\frac{1}{4},0]_{\tilde{x}}\times[-\frac{1}{8},\frac{1}{8}]_{\tilde{\psi}})}+|\pa_{\tilde{\psi}}f|_ {{L}^\infty([-\frac{1}{4},0]_{\tilde{x}}\times[-\frac{1}{8},\frac{1}{8}]_{\tilde{\psi}})}+|\pa_{\tilde{\psi}}^2f|_ {{L}^\infty([-\frac{1}{4},0]_{\tilde{x}}\times[-\frac{1}{8},\frac{1}{8}]_{\tilde{\psi}})}\leq C.
\eeno
Especially,  it holds that
\beno
|\pa_x^2 w(x_3,\psi_3)|\leq C\psi_3^{-\frac{1}{2}},\quad|\pa_{\psi}\pa_xw(x_3,\psi_3)|\leq C,\quad|\pa_{\psi}^2\pa_xw(x_3,\psi_3)|\leq C\psi_3^{-1},
\eeno
which give our results.
\end{proof}

\section{Higher order regularity estimates}

In this section, we study the higher order regularity of the solution constructed in Theorem \ref{thm:olei} with $u_0\in \cK$ and $\frac{d p(x)}{dx}\leq 0$ smooth. The following is one of our main results.

\begin{proposition}\label{prop:high}
Let $0<\epsilon<X$ and $k\ge 2$, there exists $\delta>0$  so that in $[\epsilon,X]\times[0,\delta],$
\begin{align*}
|\pa_x^{k}w|\leq C\psi,\quad |\pa_\psi\pa_x^{k} w|\leq C, \quad |\pa_\psi^2\pa_x^{k} w|\le C\psi^{-1}.
\end{align*}
Here $C$ is a constant independent of $\psi$.
\end{proposition}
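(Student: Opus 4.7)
I will proceed by induction on $k\ge 2$. The base case $k=2$ takes only Lemma \ref{lem:growth} and Proposition \ref{prop:lower} as input; the step $k\ge 3$ additionally uses the three estimates at index $k-1$. A crucial auxiliary part of the induction hypothesis is that $v_{k-1}:=\pa_x^{k-1}w$ satisfies the parabolic equation
$\pa_x v_{k-1}-\sqrt{w}\,\pa_\psi^2 v_{k-1}-\mathcal{A}_{k-1}v_{k-1}=\mathcal{B}_{k-1}$
with $|\mathcal{B}_{k-1}|\le C$ and $\mathcal{A}_{k-1}=\frac{k v_1+2p'(x)}{2w}$.

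\emph{Step 1 (PDE for $v_k:=\pa_x^k w$).} Applying $\pa_x^k$ to \eqref{eq:sPrandtl-VM} and expanding by Leibniz, the $j=k$ term contributes $(\pa_x^k\sqrt{w})\pa_\psi^2 w$, whose leading piece $\frac{v_k}{2\sqrt{w}}\pa_\psi^2 w$ equals $\frac{v_1+2p'}{2w}v_k$ after using the identity $\sqrt{w}\pa_\psi^2 w=v_1+2p'(x)$ drawn from \eqref{eq:sPrandtl-VM}. A second $v_k$-linear piece is exposed in the $j=1$ term $k(\pa_x\sqrt{w})\pa_\psi^2 v_{k-1}$ by substituting $\sqrt{w}\pa_\psi^2 v_{k-1}=v_k-\mathcal{A}_{k-1}v_{k-1}-\mathcal{B}_{k-1}$, producing $\frac{kv_1}{2w}v_k$. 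Transferring both to the left-hand side gives
\[
\pa_x v_k-\sqrt{w}\,\pa_\psi^2 v_k-\mathcal{A}_k v_k=\mathcal{B}_k,\qquad \mathcal{A}_k=\frac{(k+1)v_1+2p'(x)}{2w},
\]
where $\mathcal{B}_k$ is an explicit polynomial in $v_j,\mathcal{A}_jv_j,\mathcal{B}_j$ ($j\le k-1$), the lower-order pieces of $\pa_x^j\sqrt{w}$, and $p^{(\ell)}(x)$.

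\emph{Step 2 (Boundedness of $\mathcal{B}_k$ and sign structure of $\mathcal{A}_k$).} By the inductive bounds $|v_j|\le C\psi$, $|\mathcal{A}_j v_j|\le C$, $|\mathcal{B}_j|\le C$ for $j\le k-1$ together with $w\ge c\psi$, every ratio in $\mathcal{B}_k$ is $O(1)$: the factors $v_1/w$, $(v_i v_j)/w^2$ and $\pa_x^j\sqrt{w}/\sqrt{w}$ are all uniformly bounded, and so are the products $v_{k-j+1}$, $\mathcal{A}_{k-j}v_{k-j}$, $\mathcal{B}_{k-j}$ that appear after the substitutions $\sqrt{w}\pa_\psi^2 v_{k-j}=v_{k-j+1}-\mathcal{A}_{k-j}v_{k-j}-\mathcal{B}_{k-j}$. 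Hence $|\mathcal{B}_k|\le C$. Splitting $\mathcal{A}_k=\frac{p'(x)}{w}+\frac{(k+1)v_1}{2w}$, the first summand is non-positive with a $1/\psi$ singularity (favorable pressure), while the second is uniformly bounded thanks to Lemma \ref{lem:growth}.

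\emph{Step 3 (Barrier and maximum principle).} Conjugate by $e^{\lambda x}$ with $\lambda$ larger than $\|(k+1)v_1/(2w)\|_{L^\infty([\epsilon/2,X]\times[0,\delta])}$, so that the transformed zeroth-order coefficient $\widetilde{\mathcal{A}}_k$ is non-positive throughout. Following Lemma \ref{lem:growth}, take a cutoff $\zeta(x)\equiv 1$ on $[\epsilon,X]$ and $\zeta\equiv 0$ on $[0,\epsilon/2]$, and the barrier $\varphi(\psi)=A_1\psi-A_2\psi^{1+\beta}$ with $\beta\in(0,1/2)$. Because $\widetilde{\mathcal{A}}_k\le 0$ and $\varphi\ge 0$,
\[
\bigl(\pa_x-\sqrt{w}\pa_\psi^2-\widetilde{\mathcal{A}}_k\bigr)\varphi\ge c A_2\beta(1+\beta)\psi^{\beta-1/2},
\]
which dominates $|e^{-\lambda x}\mathcal{B}_k|\le C$ once $A_2$ is large enough. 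The parabolic boundary values of $\wt v_k\zeta-\varphi$ are $\le 0$: at $\psi=0$ both vanish, at $x=0$ by $\zeta(0)=0$, and at $\psi=\delta$ by choosing $A_1$ large relative to the a priori bound $|v_k(x,\delta)|\le C\delta^{-1/2}$ supplied by Proposition \ref{prop:lower} (for $k=2$) or by Step 4 at level $k-1$ (for $k\ge 3$). The maximum principle -- valid since $\widetilde{\mathcal{A}}_k\le 0$ even allowing the $1/\psi$ singularity -- yields $|v_k|\le A_1\psi$ on $[\epsilon,X]\times[0,\delta]$.

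\emph{Step 4 (Schauder rescaling for $\pa_\psi v_k$ and $\pa_\psi^2 v_k$).} Fix $(x_3,\psi_3)\in[\epsilon,X]\times(0,\delta]$ and rescale the Step 1 equation on the parabolic box $Q$ to the unit box $\wt Q$ exactly as in Proposition \ref{prop:lower}. Setting $f:=\psi_3^{-1}v_k$ (bounded by Step 3), one obtains
\[
\pa_{\wt x}f-\psi_3^{-1/2}\sqrt{w}\,\pa_{\wt\psi}^2 f-\psi_3^{3/2}\mathcal{A}_k f=\psi_3^{1/2}\mathcal{B}_k,
\]
uniformly parabolic with $C^\alpha(\wt Q)$ coefficients -- the Hölder estimates being produced by the inductive bounds and the Hölder calculus already worked out in Proposition \ref{prop:lower} -- and bounded right-hand side. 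Krylov's interior Schauder estimate (Theorem 8.11.1 of \cite{Kry}) supplies $|\pa_{\wt x}f|+|\pa_{\wt\psi}f|+|\pa_{\wt\psi}^2 f|\le C$ on a smaller cylinder, which rescales to $|\pa_\psi v_k(x_3,\psi_3)|\le C$ and $|\pa_\psi^2 v_k(x_3,\psi_3)|\le C\psi_3^{-1}$. Since $(x_3,\psi_3)$ is arbitrary, the induction is complete.

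\emph{Main obstacle.} The delicate part is Step 2: every $v_k$-linear contribution hidden in the Leibniz/substitution cascade must be gathered into $\mathcal{A}_k$, so that the residual $\mathcal{B}_k$ is genuinely $O(1)$ rather than $O(\psi^{-1/2})$. The favorable pressure condition $p'(x)\le 0$ is essential here, since the singular summand $p'/w$ of $\mathcal{A}_k$ must have the right sign for the maximum principle to proceed; the remaining, potentially wrong-signed but bounded piece $(k+1)v_1/(2w)$ is then neutralized by the exponential conjugation.
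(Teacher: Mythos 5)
Your Steps 1--2 and 4 are in substance the paper's own computations (the linearized equation for $v_k=\pa_x^k w$, the favorable sign of $p'/w$ --- which the paper exploits directly at the maximum point instead of by exponential conjugation --- and the rescaled interior Schauder argument of Lemma \ref{lem:s3}). The genuine gap is in Step 3, and it sits exactly where the paper inserts an extra intermediate estimate (Lemma \ref{lem:s1}). At the start of the induction step the only level-$k$ information available is the crude bound $|\pa_x^k w|\le C\psi^{-1/2}$ inherited from the previous step (Proposition \ref{prop:lower} when $k=2$). This causes two failures in your barrier argument. First, applying the operator to $\wt v_k\,\zeta$ produces the commutator term $\wt v_k\,\zeta'(x)$, supported in the transition region $[\epsilon/2,\epsilon]$, where it is only $O(\psi^{-1/2})$; your barrier supplies only $cA_2\psi^{\beta-1/2}$ with $\beta>0$, which cannot dominate $C\psi^{-1/2}$ as $\psi\to0$ for any fixed $A_2$. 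Your Step 3 checks domination of the bounded $\mathcal{B}_k$ only and never accounts for this term. Second, you assert that $\wt v_k\zeta$ vanishes at $\psi=0$; but $|\pa_x^k w|\le C\psi^{-1/2}$ gives neither continuity of $v_k$ up to $\{\psi=0\}$ nor vanishing there --- establishing $|v_k|\le C\psi$ (hence the vanishing) is precisely what is being proved, so the parabolic boundary condition needed for the maximum principle is circular as stated.

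The paper circumvents both problems in Lemma \ref{lem:s1} by working with the difference quotient $\frac{1}{-h}\big(\pa_x^{k-1}w(x-h,\psi)-\pa_x^{k-1}w(x,\psi)\big)$, which, unlike $\pa_x^k w$, is continuous up to $\psi=0$ and vanishes there because $|\pa_x^{k-1}w|\le C\psi$, and by using the barrier $M\psi\ln\psi$, whose contribution $-M\sqrt{w}\,\psi^{-1}\le-\gamma M\psi^{-1/2}$ exactly matches the $O(\psi^{-1/2})$ size of all bad terms (including the cutoff term). This gives the intermediate bound $|\pa_x^k w|\le C\psi^{1-\beta}$ on $[\tfrac{7\epsilon}{8},X]$; only afterwards (Lemma \ref{lem:s2}) is a linear-in-$\psi$ barrier run, with the new cutoff supported where the improved bound already holds, so the commutator term is $O(\psi^{1-\beta})$ and the comparison function is continuous and zero at $\psi=0$. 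To make your proof close you must insert such an intermediate upgrade (or another device improving $\psi^{-1/2}$ to a bound vanishing at $\psi=0$ in the cutoff region and giving continuity of $v_k$ at the boundary) before Step 3; the single-barrier argument as written does not.
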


\begin{proof}
The proof is based on the induction argument. Thanks to Lemma \ref{lem:growth} and Proposition \ref{prop:lower},  we may inductively assume that for  $0\leq j\leq k-1,$ there holds that in $\big[\f \epsilon 2, X]\times [0, \delta_3]$,
\ben\label{eq:induct0}
|\pa_\psi\pa_x^{j} w|\le C,\,\,|\pa_\psi^2\pa_x^{j} w|\le C\psi^{-1},\,\,|\pa_x^{j} w|\le C\psi,\,\,|\pa_x^j\sqrt{ w}|\leq C\psi^{\frac{1}{2}},\,\,|\pa_x^k w|\leq C\psi^{-\frac{1}{2}}.
\een Without loss of generality, assume $\delta_3<<1.$
Our goal is to show that there exists $\delta_4<\delta_3$ so that in $[\epsilon,X]\times[0,\delta_4],$
\ben\label{eq:induct}
|\pa_\psi\pa_x^{k} w|\le C,\,|\pa_\psi^2\pa_x^{k} w|\le C\psi^{-1},\,|\pa_x^k w|\le C\psi,\,|\pa_x^k\sqrt{ w}|\leq C\psi^{\frac{1}{2}},\,|\pa_x^{k+1}w|\leq C\psi^{-\frac{1}{2}}.
\een
This will follow from the following Lemma \ref{lem:s1}, Lemma \ref{lem:s2} and Lemma \ref{lem:s3}.
\end{proof}

\begin{lemma}\label{lem:s1} 
Assume that \eqref{eq:induct0} holds. Then
it holds that in $[\frac{7\epsilon}{8},X]\times[0,\delta_3],$
\begin{align*}
|\pa_x^k w|<M_1\psi^{1-\beta},\quad |\pa_x^k\sqrt{ w}|\leq M_1\psi^{\frac{1}{2}-\beta}
\end{align*}
for any $0<\beta\ll1$.
\end{lemma}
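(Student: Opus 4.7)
The plan is to derive an equation for $f := \pa_x^k w$ by differentiating the Von Mises equation $k$ times in $x$, and then deduce the pointwise bound via a maximum-principle/barrier argument analogous to Lemma~\ref{lem:growth}. Applying $\pa_x^k$ to \eqref{eq:sPrandtl-VM} and using Leibniz gives
\[
  \pa_x f - \sqrt{w}\,\pa_\psi^2 f
  = \sum_{j=1}^{k-1}\binom{k}{j}\pa_x^j\sqrt{w}\,\pa_\psi^2\pa_x^{k-j}w
  + \pa_x^k\sqrt{w}\cdot\pa_\psi^2 w - 2p^{(k+1)}.
\]
The $j=k$ contribution is self-referential: write $\pa_x^k\sqrt{w} = \frac{f}{2\sqrt{w}} + T$, where $T$ is a polynomial in $\{\pa_x^j\sqrt{w}\}_{j<k}$ extracted from the Leibniz expansion of $w=(\sqrt{w})^2$; substitute $\pa_\psi^2 w = (\pa_x w + 2p')/\sqrt{w}$ from the original equation; and move the resulting piece $\frac{p'}{w}f$ to the left-hand side as $+cf$ with $c := -p'/w \ge 0$, crucially using $p'\le 0$. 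Using \eqref{eq:induct0} to bound the remaining terms (the $j=1,\ldots,k-1$ sum by $C\psi^{-1/2}$, $T\cdot\pa_\psi^2 w$ by $C$, the leftover $\frac{f\,\pa_x w}{2w}$ by $C|f|\le C\psi^{-1/2}$), the equation reduces to
\[
  \pa_x f - \sqrt{w}\,\pa_\psi^2 f + c f = R', \qquad c\ge 0,\ |R'|\le C\psi^{-1/2}.
\]

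Next, I take a smooth cutoff $\zeta(x)$ with $\zeta\equiv 0$ on $[0,\tfrac{3\epsilon}{4}]$ and $\zeta\equiv 1$ on $[\tfrac{7\epsilon}{8},X]$, and the barrier $\varphi(\psi) = M_1\psi^{1-\beta}$ with $M_1$ large, and consider $F := f\zeta - \varphi$ (the case $-f\zeta-\varphi$ is symmetric). A direct computation gives
\[
  \pa_x F - \sqrt{w}\,\pa_\psi^2 F + cF = \zeta R' + f\,\pa_x\zeta + \sqrt{w}\,\pa_\psi^2\varphi - c\varphi.
\]
The term $\sqrt{w}\,\pa_\psi^2\varphi = -M_1(1-\beta)\beta\sqrt{w}\,\psi^{-1-\beta}\lesssim -M_1\beta\sqrt{m}\,\psi^{-\frac{1}{2}-\beta}$ dominates $|\zeta R'|+|f\,\pa_x\zeta|\le C\psi^{-1/2}$ (the latter uses $|f|\le C\psi^{-1/2}$ from \eqref{eq:induct0}) as $\psi\to 0^+$, while $-c\varphi\le 0$ since $c,\varphi\ge 0$; hence for $M_1$ large enough the right-hand side is strictly negative on $(0,X]\times(0,\delta_3)$. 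The boundary data pose no problem: $F(\cdot,0)=0$ since $w(\cdot,0)=0$ forces $f(\cdot,0)=0$; $F(\cdot,\delta_3)\le C\delta_3^{-1/2}-M_1\delta_3^{1-\beta}\le 0$ by choice of $M_1$; and where $\zeta\equiv 0$, $F=-\varphi\le 0$. The weak maximum principle (valid because $c\ge 0$) then forces $F\le 0$, yielding $|\pa_x^k w|\le M_1\psi^{1-\beta}$ on $[\tfrac{7\epsilon}{8},X]\times[0,\delta_3]$.

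Finally, the identity $\pa_x^k w = 2\sqrt{w}\,\pa_x^k\sqrt{w} + \sum_{j=1}^{k-1}\binom{k}{j}\pa_x^j\sqrt{w}\cdot\pa_x^{k-j}\sqrt{w}$ coming from $w = (\sqrt{w})^2$, together with \eqref{eq:induct0} and $w\ge m\psi$, gives
\[
  |\pa_x^k\sqrt{w}|\le \frac{|\pa_x^k w|}{2\sqrt{w}} + C\sqrt{\psi}\le C\psi^{\frac{1}{2}-\beta},
\]
which yields the second inequality after absorbing constants into $M_1$. The main obstacle is the $j=k$ Leibniz term, which produces a zeroth-order coefficient $p'/w$ that is singular of order $\psi^{-1}$: the key observation unlocking the argument is that $p'\le 0$ renders this a good-signed reaction term, so that the weak maximum principle still applies to the perturbed operator and the $O(\psi^{-1/2-\beta})$ negativity coming from $\sqrt{w}\,\pa_\psi^2\varphi$ can absorb all the other $O(\psi^{-1/2})$ contributions.
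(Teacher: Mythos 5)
Your overall strategy (barrier plus maximum principle, with the singular zeroth-order term $p'/w$ rendered harmless by the favorable sign of $p'$) is the same mechanism the paper uses; the algebraic reduction of the top Leibniz term and the estimates $|R'|\le C\psi^{-1/2}$, as well as the final passage from $\pa_x^kw$ to $\pa_x^k\sqrt w$, are all fine. But there is a genuine gap at the boundary $\{\psi=0\}$. You apply the maximum principle to $F=f\zeta-\varphi$ with $f=\pa_x^kw$ and assert $F(\cdot,0)=0$ because ``$w(\cdot,0)=0$ forces $f(\cdot,0)=0$''. At this stage of the induction the only information on $\pa_x^kw$ is $|\pa_x^kw|\le C\psi^{-1/2}$ from \eqref{eq:induct0}; this neither gives continuity of $\pa_x^kw$ up to $\psi=0$ nor a zero trace there (that $\pa_x^kw=O(\psi)$ is precisely what the lemma and Lemma \ref{lem:s2} are meant to establish). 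Consequently $F$ need not be continuous on $\overline D$, its supremum need not be attained, and the supremum could a priori be approached as $\psi\to0^+$ where your a priori bound allows $f$ to blow up like $\psi^{-1/2}$ while $\varphi\to0$. The weak maximum principle argument therefore cannot be closed as written, and the obvious regularizations (cutting at $\psi=\eta$, or subtracting $\sigma\psi^{-a}$) fail: at $\psi=\eta$ you only control $f$ by $C\eta^{-1/2}$, and the extra barrier term produces a large positive contribution $\sigma\sqrt w\,a(a+1)\psi^{-a-2}$ that destroys the supersolution property.

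This is exactly the difficulty the paper's proof is built to avoid: instead of $\pa_x^kw$ it runs the barrier argument on the difference quotient $g_1=\frac{1}{-h}\big(\pa_x^{k-1}w(x-h,\psi)-\pa_x^{k-1}w(x,\psi)\big)$, multiplied by the cutoff and corrected by $M\psi\ln\psi$. Because the induction hypothesis gives $|\pa_x^{k-1}w|\le C\psi$ together with continuity up to $\psi=0$, for each fixed $h$ the quantity $g_1$ is continuous on $\overline D$ and vanishes at $\psi=0$, so the maximum-principle argument is legitimate; the favorable sign of $p'$ is then used pointwise at the putative interior positive maximum (the term $g_1\cdot\frac{p'}{w}\le0$ there), the constant $M$ is chosen independently of $h$, and letting $h\to0$ recovers the bound on $\pa_x^kw$. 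To repair your proof you should either adopt this difference-quotient device or supply an independent argument that $\pa_x^kw$ extends continuously to $\psi=0$ with zero boundary value; without one of these, the step ``$F\le0$ on the parabolic boundary'' is unjustified and the conclusion does not follow.
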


\begin{proof}
Fix  any $h<\frac{\epsilon}{8}$. Set $D=(0,X]\times (0,\delta_3)$ and let
$$ g=\left\{
\begin{aligned}
&\frac{\pa_x^{k-1}w(x-h,\psi)-\pa_x^{k-1}w(x,\psi)}{-h}\zeta+M\psi\ln \psi\,\,\,&\frac{5\epsilon}{8}\leq x\leq X,\psi\geq 0,\\
&M\psi\ln \psi\,\,\,&0\leq x<\frac{5\epsilon}{8},\psi\geq0,
\end{aligned}
\right.
$$
where $M$ is a big positive constant to be determined, and $\zeta(x)$ is a smooth cutoff function so that
 $0\leq\zeta(x)\leq 1$, $\zeta(x)\equiv 1$ in $[\frac{7}{8}\epsilon,X]$, $\zeta(x)\equiv 0$ in $[0,\frac{5\epsilon}{8}]$.

 By the  definition of $g$ and continuity of $\pa_x^{k-1} w$ up to $\{\psi=0\},$ we know that $g$ is smooth in $D$ and continuous in $\bar{D}$(Growth estimate implies continuity, as pointed out on P38 in \cite{Olei}). By the assumption, we have $g(x,0)=0.$
By the definition of $\zeta,$ we have $g(0,\psi)\leq 0.$ Taking $M$ large enough, we have
for $x\in [0,X],$
\beno
g(x,\delta_3)\leq C(\delta_3)^{-\frac{1}{2}}+M\delta_3\ln\delta_3\leq 0.
\eeno

Next we show that by a proper choice of $M,$
 the positive maximum of $g$ cannot be achieved in the interior. We argue by the contradiction. Assume that there exists a point $(x_0,\psi_0)=p_0\in D$ such that
 $$g(p_0)=\max_{\overline{D}} g>0,$$
 which implies $x_0>\frac{5\epsilon}{8}$ and
 \begin{align}\label{sign}
  \frac{1}{-h}\big(\pa_x^{k-1} w(x_0-h,\psi_0)-\pa_x^{k-1} w(x_0,\psi_0)\big)> 0.
 \end{align}
 By \eqref{xd}, we have
\begin{align}\label{gpsi2}
  -\sqrt{w}\pa_\psi^2(M\psi\ln \psi)=-M\sqrt{w}\psi^{-1}\leq -\gamma M\psi^{-\frac{1}{2}}
\end{align}
for $\gamma=\sqrt{m}.$
By \eqref{eq:sPrandtl-VM}, we have in $D$,
\begin{align*}
  \pa_x \pa_x^{k-1} w-\sqrt{w}\pa_\psi^2 \pa_x^{k-1} w=&-2\frac{d^kp}{dx^k}+\sum_{m=1}^{k-2}C_{k-1}^m(\pa_x^{k-1-m}\sqrt{w})\pa_\psi^2 \pa_x^{m} w+(\pa_x^{k-1}\sqrt{w})\pa_\psi^2  w\\
  =&-2\frac{d^kp}{dx^k}+\sum_{m=1}^{k-2}C_{k-1}^m(\pa_x^{k-1-m}\sqrt{w})\pa_\psi^2 \pa_x^{m} w+\frac{\pa_x^{k-1}w}{2\sqrt{w}}\frac{\pa_x w}{\sqrt{w}}\\&+\Big(\frac{\pa_x^{k-1}w}{2\sqrt{w}}\Big)\frac{2}{\sqrt{w}}\frac{dp}{dx}+\sum_{m=0}^{k-3}C_{k-2}^m\pa_\psi^2w\pa_x^{m+1}w\pa_x^{k-2-m}\frac{1}{2\sqrt{w}}.
 \end{align*}
We denote
 \begin{align*}
    I_1&=-2\frac{d^kp}{dx^k}+\sum_{m=1}^{k-2}C_{k-1}^m(\pa_x^{k-1-m}\sqrt{w})\pa_\psi^2 \pa_x^{m} w+\frac{\pa_x^{k-1}w\pa_x w}{2w},\\
     I_2&=\Big(\frac{\pa_x^{k-1}w}{2\sqrt{w}}\Big)\frac{2}{\sqrt{w}}\frac{dp}{dx},\\
      I_3&=\sum_{m=0}^{k-3}C_{k-2}^m\pa_\psi^2w\pa_x^{m+1}w\pa_x^{k-2-m}\frac{1}{2\sqrt{w}}.
 \end{align*}
 Then for $x\geq\frac{5\epsilon}{8},$
 \begin{align}\label{eq:g1}
 \begin{split}
   \pa_x g_1-\sqrt{w(p_h)}\pa_\psi^2g_1=&\frac{\sqrt{w(p_h)}-\sqrt{w(p)}}{-h}\pa_\psi^2 \pa_x^{k-1} w(p)+\sum_{i=1}^3\frac{1}{-h}(I_i(p_h)-I_i(p)).
   \end{split}
 \end{align}
 where $g_1=\frac{1}{-h}\big(\pa_x^{k-1}w(p_h)-\pa_x^{k-1}w(p)\big)$ with $p_h=(x-h,\psi),\,\,p=(x,\psi).$

 Using the induction assumption \eqref{eq:induct0}, it is easy to verify that for $x\geq\frac{5\epsilon}{8},$
 \begin{align}
 \label{I0}
&\Big|\frac{1}{-h}(\sqrt{w}(p_h)-\sqrt{w}(p))\pa_\psi^2 \pa_x^{k-1} w(p)\Big|\leq C\psi^{-\frac{1}{2}},\\
 \label{I1}
   &\Big |\frac{1}{-h}(I_1(p_h)-I_1(p))\Big|\leq C\psi^{-\frac{1}{2}},\\
    \label{I3}
    &\Big|\frac{1}{-h}(I_3(p_h)-I_3(p))\Big|\leq C\psi^{-\frac{1}{2}}.
    \end{align}
We have
 \begin{align*}
   \frac{1}{-h}(I_2(p_h)-I_2(p))=g_1\cdot \Big[\frac{1}{w}\frac{dp}{dx}(p_h)\Big]+\pa_x^{k-1} w(p) \frac{1}{-h}\Big[\frac{1}{w}\frac{dp}{dx}(p_h)-\frac{1}{w}\frac{dp}{dx}(p)\Big].
 \end{align*}
By the induction assumption \eqref{eq:induct0} and $g_1(p_0)>0, \frac{dp}{dx}\leq 0,$ we have
 \begin{align}\label{I2}
   \frac{1}{-h}(I_2(p_h)-I_2(p))\leq C\quad \text{at}\quad p=p_0.
 \end{align}

 Summing up \eqref{I0}, \eqref{I1}, \eqref{I2}, \eqref{I3}, we conclude that at $p=p_0,$
 \beno
 \pa_x g_1-\sqrt{w}\pa_\psi^2 g_1\leq C_2 \psi^{-\frac{1}{2}}.
 \eeno
 This along with \eqref{gpsi2}  shows that for $x\geq\frac{5\epsilon}{8}$, we have at $p=p_0,$
\begin{align}
  \pa_x g-\sqrt{w}\pa_\psi^2  g\leq C \psi^{-\frac{1}{2}}-\gamma M\psi^{-\frac{1}{2}}.
\end{align}
Take $M$ large enough. Then we have $\pa_x g(p_0)-\sqrt{w}\pa_\psi^2  g(p_0)<0.$
However, if $p_0$ is the interior positive maximum point of $g$, we have
\begin{align*}
  \pa_x g(p_0)-\sqrt{w}\pa_\psi^2g(p_0)\geq0,
\end{align*}
which leads to a contradiction. Hence, for $M$ chosen as above and independent of $h,$ we have
$$\max_{\overline{D}}g\leq 0.$$

Similarly, we can show that $\min_{\overline{D}}g_2 \geq 0,$ where
$$ g_2=\left\{
\begin{aligned}
&\frac{\pa_x^{k-1}w(x-h,\psi)-\pa_x^{k-1}w(x,\psi)}{-h}\zeta-M\psi\ln \psi\,\,\,&\frac{5\epsilon}{8}\leq x\leq X,\psi\geq 0,\\
&-M\psi\ln \psi\,\,\,&0\leq x<\frac{5\epsilon}{8},\psi\geq0.
\end{aligned}
\right.
$$

Since $M$ is independent of $h$ and $h$ is arbitrary, we have
$$|\pa_x^k w|\leq -M\psi\ln\psi\quad \text{in} \quad(\frac{7}{8}\epsilon,X]\times(0,\delta_3].$$
Thanks to
\begin{align}\label{sqrtwx}
  2\sqrt{w}\pa_x^k\sqrt{w} +\sum_{m=1}^{k-1}C_{k}^m(\pa_x^m\sqrt{w}\pa_x^{k-m}\sqrt{w})=\pa_x^k(\sqrt{w}\sqrt{w})=\pa_x^k w,
\end{align}
we get by \eqref{eq:induct0}  that in $(\frac{7}{8}\epsilon,X]\times(0,\delta_3],$
$$|\sqrt{w}\pa_x^k\sqrt{w} |\leq -C\psi\ln\psi.$$
Then we have the desired result.
\end{proof}

\begin{lemma}\label{lem:s2}Assume that \eqref{eq:induct0} holds. Then
it holds that in $\big[\frac{15}{16}\epsilon,X\big]\times[0,\delta_3],$
\begin{align*}
|\pa_x^k w|\le C\psi,\quad |\pa_x^k\sqrt{ w}|\leq C\psi^{\frac{1}{2}}.
\end{align*}
\end{lemma}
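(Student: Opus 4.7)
The plan is to run a maximum principle/barrier argument at the $k$-th order analogous to Lemma \ref{lem:growth}, using the almost-linear bound $|\pa_x^k w|\le M_1\psi^{1-\beta}$ from Lemma \ref{lem:s1} as the crucial ingredient that makes the barrier close.

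First I would derive a parabolic equation for $\pa_x^k w$. Differentiating \eqref{eq:sPrandtl-VM} $k$ times in $x$ and applying Leibniz gives
\begin{align*}
\pa_x(\pa_x^k w)-\sqrt{w}\pa_\psi^2(\pa_x^k w)=\sum_{j=1}^{k}\binom{k}{j}(\pa_x^j\sqrt{w})\,\pa_\psi^2\pa_x^{k-j}w-2\,p^{(k+1)}(x).
\end{align*}
The only genuinely dangerous term is the $j=k$ piece $(\pa_x^k\sqrt{w})\pa_\psi^2 w$: using \eqref{sqrtwx} to write $\pa_x^k\sqrt{w}=\frac{1}{2\sqrt{w}}\pa_x^k w+(\text{terms in }\pa_x^j\sqrt{w},\,j\le k-1)$ and substituting $\pa_\psi^2 w=(\pa_x w+2p'(x))/\sqrt{w}$ from \eqref{eq:sPrandtl-VM}, I would rewrite the identity as
\begin{align*}
\pa_x(\pa_x^k w)-\sqrt{w}\pa_\psi^2(\pa_x^k w)-\frac{\pa_x w+2p'(x)}{2w}\pa_x^k w=R,
\end{align*}
where $R$ involves only $\pa_x^j w,\,\pa_x^j\sqrt{w}$ with $j\le k-1$ together with $p^{(k+1)}$. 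The induction hypothesis \eqref{eq:induct0} and Lemma \ref{lem:growth} then give $|R|\le C\psi^{-\beta}$ for any small $\beta>0$ (the worst products being of the form $\psi^{1/2}\cdot\psi^{-1}\cdot\psi$ type, all dominated by $\psi^{-\beta}$ once the $\psi^{1-\beta}$ bound from Lemma \ref{lem:s1} is used).

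Second, I would take a smooth cutoff $\zeta(x)$ with $\zeta\equiv 1$ on $[15\epsilon/16,X]$ and $\zeta\equiv 0$ on $[0,7\epsilon/8]$, and the barrier $\varphi(\psi)=A_1\psi-A_2\psi^{1+\gamma}$ for $\gamma=\tfrac12-\beta\in(0,\tfrac12)$ and large constants $A_1\gg A_2$ to be determined. For $F:=\pa_x^k w\,\zeta(x)-\varphi(\psi)$, a direct computation using the equation above and \eqref{xd} yields
\begin{align*}
\pa_x F-\sqrt{w}\pa_\psi^2 F\le C\psi^{-\beta}+C|\zeta'(x)|\psi^{1-\beta}+\frac{\pa_x w+2p'(x)}{2w}\pa_x^k w\,\zeta-A_2\gamma(1+\gamma)\sqrt{m}\,\psi^{\gamma-1/2}.
\end{align*}
At a positive interior maximum of $F$ one has $\pa_x^k w\,\zeta=F+\varphi>0$, so that $\pa_x^k w>0$ (using $\varphi\ge 0$ on $[0,\delta_3]$, which holds provided $A_1\ge A_2\delta_3^{\gamma}$); consequently the contribution $\frac{p'(x)}{w}\pa_x^k w\,\zeta\le 0$ by favorable pressure, while $|\frac{\pa_x w}{w}\pa_x^k w|\le C\psi^{1-\beta}$ by Lemma \ref{lem:growth} and Lemma \ref{lem:s1}. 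Since $\gamma-\tfrac12=-\beta$, choosing $A_2$ large turns the barrier term into the dominant one and forces $\pa_x F-\sqrt{w}\pa_\psi^2 F<0$, contradicting the maximum property. Checking the parabolic boundary ($\pa_x^k w\equiv 0$ at $\psi=0$ so $F=0$; $F\le -\varphi\le 0$ on $\{x\le 7\epsilon/8\}$ after choosing $A_1$ large; $F\le 0$ at $\psi=\delta_3$ by the induction bound and another enlargement of $A_1$) and treating the lower bound symmetrically yields $|\pa_x^k w|\le A_1\psi$ on $[15\epsilon/16,X]\times[0,\delta_3]$.

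Finally, the bound $|\pa_x^k\sqrt{w}|\le C\psi^{1/2}$ follows from \eqref{sqrtwx}: the cross terms $\pa_x^m\sqrt{w}\,\pa_x^{k-m}\sqrt{w}$ with $1\le m\le k-1$ are bounded by $C\psi$ under \eqref{eq:induct0}, hence $|\sqrt{w}\,\pa_x^k\sqrt{w}|\le C\psi$, and dividing by $\sqrt{w}\ge\sqrt{m}\,\psi^{1/2}$ gives the claim. The main obstacle is the $\pa_x^k\sqrt{w}\cdot\pa_\psi^2 w$ term: without the sharpened $\psi^{1-\beta}$ bound from Lemma \ref{lem:s1} one would only obtain a source of size $\psi^{-3/2}$, which no barrier of the form $A_1\psi-A_2\psi^{1+\gamma}$ can absorb. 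It is precisely the gain of a factor $\psi^{3/2-\beta}$ from Lemma \ref{lem:s1}, combined with the sign from $p'\le 0$ in the zeroth-order term, that allows the barrier argument to close.
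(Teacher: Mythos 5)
Your overall strategy is the paper's: a cutoff $\zeta$ supported in $[\tfrac{7\epsilon}{8},X]$, the barrier $A_1\psi-A_2\psi^{\frac32-\beta}$, the maximum principle, with the $\psi^{1-\beta}$ bound of Lemma \ref{lem:s1} as the key input, and \eqref{sqrtwx} for the $\pa_x^k\sqrt{w}$ bound at the end. However, there is a genuine gap in your estimate of the remainder $R$. For the intermediate Leibniz terms $(\pa_x^{j}\sqrt{w})\,\pa_\psi^2\pa_x^{k-j}w$ with $1\le j\le k-1$, the ingredients you cite (the induction hypothesis \eqref{eq:induct0} and Lemma \ref{lem:growth}) only give $|\pa_x^{j}\sqrt{w}|\le C\psi^{\frac12}$ and $|\pa_\psi^2\pa_x^{k-j}w|\le C\psi^{-1}$, hence a source of size $C\psi^{-\frac12}$, not $C\psi^{-\beta}$. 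A source of order $\psi^{-\frac12}$ cannot be absorbed by your barrier, whose contribution is only $-cA_2\psi^{\gamma-\frac12}=-cA_2\psi^{-\beta}$ with $\beta<\tfrac12$; so, as justified, the comparison argument would fail near $\psi=0$. The missing step (which the paper carries out explicitly) is to upgrade the bound on $\pa_\psi^2\pa_x^{m}w$, $m\le k-1$, by using the equation once more: $\pa_\psi^2\pa_x^{m}w=\pa_x^{m}\bigl(\tfrac{\pa_x w}{\sqrt{w}}+\tfrac{2}{\sqrt{w}}\tfrac{dp}{dx}\bigr)$, together with $|\pa_x^{j}w|\le C\psi$ for $j\le k-1$, $|\pa_x^{k}w|\le C\psi^{1-\beta}$ (Lemma \ref{lem:s1}) and $|\pa_x^{j}\tfrac{1}{\sqrt{w}}|\le C\psi^{-\frac12}$, which yields $|\pa_\psi^2\pa_x^{m}w|\le C\psi^{-\frac12}$ on $x\ge\tfrac{7\epsilon}{8}$; paired with $|\pa_x^{k-m}\sqrt{w}|\le C\psi^{\frac12-\beta}$ this gives the needed $C\psi^{-\beta}$. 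You applied exactly this substitution to the top term $j=k$, but it must also be applied to all intermediate terms; without it the claimed bound $|R|\le C\psi^{-\beta}$ is not established.

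Two smaller remarks. First, your use of the sign $p'\le 0$ at the positive maximum is valid but not needed here: once Lemma \ref{lem:s1} is available, the zeroth-order contribution $\tfrac{p'}{w}\pa_x^k w$ is already $O(\psi^{-\beta})$ and can simply be absorbed into the barrier (the sign of $p'$ is genuinely needed one step earlier, in the $I_2$ estimate of Lemma \ref{lem:s1}, where only $|\pa_x^k w|\le C\psi^{-\frac12}$ is known). Second, your closing diagnosis of the ``dangerous term'' is correct in spirit, but the quantitative bottleneck of the lemma lies equally in the intermediate terms discussed above, not only in the $j=k$ term.
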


\begin{proof}
Set
$$g=\pa_x^k w\zeta-A_1\psi+A_2\psi^{\frac{3}{2}-\beta},$$
where $\zeta(x)$ is a smooth cutoff function so that
 $0\leq\zeta(x)\leq 1$, $\zeta(x)\equiv 1$ in $[\frac{15}{16}\epsilon,X]$, $\zeta(x)\equiv 0$ in $\big[0,\frac{7\epsilon}{8}\big]$, and $A_1,A_2$ are large constants to be determined.

Take $\beta=\frac{1}{200}$ in Lemma \ref{lem:s1}. Then in $[\frac{7\epsilon}{8},X]\times[0,\delta_3],$
\begin{align}\label{811}
|\pa_x^k w|\le C\psi^{1-\beta},\quad |\pa_x^k\sqrt{ w}|\leq C\psi^{\frac{1}{2}-\beta}.
\end{align}

Set $D=(0,X]\times (0,\delta_3)$. Then $g$ is smooth in $D$ and continuous in $\overline{D}.$
 By \eqref{811}, we have $g(x,0)=0.$
Requiring $A_1\geq A_2,$ by the definition of $\zeta,$ we have $g(0,\psi)\leq 0.$
Taking $A_1$ large depending on $A_2$ so that for $x\in[0,A],$ $g(x,\delta_3)\leq 0.$

Next we show that by a proper choice of $A_1,A_2,$
 the maximum of $g$ cannot be achieved in the interior.
By \eqref{xd}, we have
\begin{align}\label{gpsi2-1}
  -\sqrt{w}\pa_\psi^2(-A_1\psi+A_2\psi^{\frac{3}{2}-\beta})\leq -\gamma A_2\psi^{-\beta},
\end{align} where $\gamma=(\frac{3}{2}-\beta)(\frac{1}{2}-\beta)\sqrt{m}>0.$
By \eqref{eq:sPrandtl-VM}, we have in $D,$
 \begin{align*}
  \pa_x \pa_x^{k} w-\sqrt{w}\pa_\psi^2 \pa_x^{k} w=&-2\frac{d^{k+1}p}{dx^{k+1}}+\sum_{m=0}^{k-1}C_k^m(\pa_x^{k-m}\sqrt{w})\pa_\psi^2 \pa_x^{m} w.
 \end{align*}
 By \eqref{eq:sPrandtl-VM}, we have
 \begin{align*}
    \pa_\psi^2 \pa_x^{m} w= \pa_x^{m}\pa_\psi^2 w=\pa_x^{m}\Big(\frac{\pa_xw}{\sqrt{w}}+\frac{2}{\sqrt{w}}\frac{dp}{dx}\Big).
 \end{align*}
 By \eqref{811} and \eqref{eq:induct0}, we have, for $x\geq\frac{7\epsilon}{8}$ and $0\leq j\leq k-1,$
$$|\pa_x^{j}w|\leq C\psi,\quad |\pa_x^{k}w|\leq C\psi^{1-\beta}.$$
Then for $0\leq m\leq k-1$ and $x\geq\frac{7\epsilon}{8},$
$$|\pa_\psi^2 \pa_x^{m} w|\leq C\psi^{-\frac{1}{2}}+C\psi^{\frac{1}{2}-\beta}\leq C\psi^{-\frac{1}{2}},$$
where we note $\beta<<\frac{1}{2}.$
On the other hand, by \eqref{811} and \eqref{eq:induct0}, we have for $0\leq m\leq k-1,$
$$|\pa_x^{k-m}\sqrt{w}|\leq C\psi^{\frac{1}{2}-\beta}.$$
Hence, we conclude that for $x\geq\frac{7\epsilon}{8},$
$$\pa_x \pa_x^k w-\sqrt{w}\pa_\psi^2 \pa_x^k w\leq C \psi^{-\beta},$$
from which and \eqref{gpsi2-1}, we deduce that at $p=p_0,$
\begin{align*}
  \pa_x g-\sqrt{w}\pa_\psi^2  g\leq C_2 \psi^{-\beta}-\gamma A_2\psi^{-\beta}.
\end{align*}

Now take $A_2$ large depending on $C_2$. Then we have $\pa_x g-\sqrt{w}\pa_\psi^2 g<0$ in $D$, which implies that the maximum of $g$ cannot be achieved in the interior. Hence,
$$\max_{\overline{D}}g\leq 0.$$
Similarly, we can prove that $\max_{\overline{D}}-\pa_x^k w\zeta-A_1\psi+A_2\psi^{\frac{3}{2}-\beta} \leq 0.$
Hence, we have
$$|\pa_x^k w|\leq A_1\psi-A_2\psi^{\frac{3}{2}-\beta} \leq  A_1\psi\quad \text{in}\quad \big[\frac{15}{16}\epsilon,X\big]\times[0,\delta_3].$$
Moreover, by \eqref{sqrtwx} and \eqref{eq:induct0}, we have that in $\big[\frac{15}{16}\epsilon,X\big]\times[0,\delta_3],$
$$|\pa_x^k\sqrt{w} |\leq C\psi^{\frac{1}{2}}.$$

This proves our desired result.
\end{proof}

\begin{lemma}\label{lem:s3}
Assume that \eqref{eq:induct0} holds. Then
it holds that in $[\epsilon,X]\times[0,\delta_4],$
\begin{align*}
|\pa_\psi\pa_x^{k} w|\leq C,\quad |\pa_\psi^2\pa_x^{k} w|\le C\psi^{-1},\quad |\pa_x^{k+1}w|\leq C\psi^{-\frac{1}{2}}.
\end{align*}

\end{lemma}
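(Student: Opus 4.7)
The plan is to reduce Lemma \ref{lem:s3} to a scaling argument of exactly the same shape as the proof of Proposition \ref{prop:lower}, now applied to the quantity $\pa_x^k w$ in place of $\pa_x w$. The inputs we are allowed to use are the inductive hypothesis \eqref{eq:induct0} and the growth bound $|\pa_x^k w|\le C\psi$, $|\pa_x^k \sqrt{w}|\le C\psi^{1/2}$ from Lemma \ref{lem:s2}, valid on $[\tfrac{15\epsilon}{16},X]\times[0,\delta_3]$. For any fixed $(x_3,\psi_3)\in[\epsilon,X]\times(0,\delta_0]$ with $\delta_0=\min\{\tfrac23\delta_3,\tfrac{\epsilon}{16}\}$, we take the same parabolic box
\[
Q=[x_3-\psi_3^{3/2},x_3]\times\Big[\tfrac12\psi_3,\tfrac32\psi_3\Big],
\]
and the same affine change of variables $T:Q\to\widetilde{Q}=[-1,0]_{\tilde x}\times[-\tfrac12,\tfrac12]_{\tilde\psi}$ given by $x-x_3=\psi_3^{3/2}\tilde x$, $\psi-\psi_3=\psi_3\tilde\psi$, and then study $f=\psi_3^{-1}\pa_x^k w$ on $\widetilde Q$.

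The key preparatory step is to put the equation for $\pa_x^k w$ into the same zeroth-order form that was used in Proposition \ref{prop:lower}. Differentiating \eqref{eq:sPrandtl-VM} $k$ times gives
\[
\pa_x\pa_x^k w-\sqrt{w}\pa_\psi^2\pa_x^k w=-2\tfrac{d^{k+1}p}{dx^{k+1}}+\sum_{m=0}^{k-1}C_k^m(\pa_x^{k-m}\sqrt{w})\pa_\psi^2\pa_x^m w.
\]
The only dangerous contribution is the $m=0$ term $(\pa_x^k\sqrt{w})\pa_\psi^2 w$. Using the Leibniz identity for $\pa_x^k(\sqrt{w}\cdot\sqrt{w})$ to isolate $\pa_x^k\sqrt{w}=\frac{\pa_x^k w}{2\sqrt{w}}-\frac{1}{2\sqrt{w}}\sum_{j=1}^{k-1}C_k^j(\pa_x^j\sqrt{w})(\pa_x^{k-j}\sqrt{w})$, and combining with $\pa_\psi^2 w=\frac{\pa_x w+2p'}{\sqrt{w}}$, I would rewrite the equation as
\[
\pa_x\pa_x^k w-\sqrt{w}\pa_\psi^2\pa_x^k w-\frac{\pa_\psi^2 w}{2\sqrt{w}}\pa_x^k w=R,
\]
where $R$ collects $-2p^{(k+1)}$, the Leibniz remainders of the $m=0$ term, and the terms with $1\le m\le k-1$. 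By \eqref{eq:induct0}, Lemma \ref{lem:s2} and $\sqrt{w}\sim\psi^{1/2}$, each such remainder is bounded by $C\psi^{-1/2}$, so $|R|\le C\psi^{-1/2}$ on $[\tfrac{15\epsilon}{16},X]\times(0,\delta_3]$.

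Applying $T$ to this equation and multiplying by $\psi_3^{1/2}$, the function $f=\psi_3^{-1}\pa_x^k w$ satisfies on $\widetilde Q$
\[
\pa_{\tilde x}f-\frac{\sqrt{w}}{\psi_3^{1/2}}\pa_{\tilde\psi}^2 f-\frac{\pa_{\tilde\psi}^2 w}{2\sqrt{w}\,\psi_3^{1/2}}f=\psi_3^{1/2}R,
\]
which has exactly the coefficient structure of the equation for $\psi_3^{-1}\pa_x w$ in Proposition \ref{prop:lower}. The ellipticity and the $\cC^\alpha$-bounds on $\sqrt{w}/\psi_3^{1/2}$, on $\pa_{\tilde\psi}^2 w\,\psi_3^{-1}$ via \eqref{psipsialpha}, and on $\psi_3^{1/2}/\sqrt{w}$ via \eqref{psichusqrtw}, are recycled verbatim. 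The source term $\psi_3^{1/2}R$ is $L^\infty$-bounded on $\widetilde Q$ since $\psi\sim\psi_3$ on $Q$, and $|f|\le C$ follows from $|\pa_x^k w|\le C\psi\le C\psi_3$ on $Q$. Krylov's interior parabolic estimate (Theorem 8.11.1 in \cite{Kry}) then gives $\|\pa_{\tilde x} f\|_{L^\infty}+\|\pa_{\tilde\psi}f\|_{L^\infty}+\|\pa_{\tilde\psi}^2 f\|_{L^\infty}\le C$ on a smaller subcube, and undoing the rescaling at $(\tilde x,\tilde\psi)=(0,0)$ yields
\[
|\pa_x^{k+1}w(x_3,\psi_3)|\le C\psi_3^{-1/2},\quad|\pa_\psi\pa_x^k w(x_3,\psi_3)|\le C,\quad|\pa_\psi^2\pa_x^k w(x_3,\psi_3)|\le C\psi_3^{-1}.
\]

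The main obstacle is not the parabolic machinery, which is identical to Proposition \ref{prop:lower}, but the algebraic reorganization of the $k$-fold differentiated equation that extracts the zeroth-order term $\frac{\pa_\psi^2 w}{2\sqrt{w}}\pa_x^k w$ cleanly and confines all other pieces to a remainder $R$ that can be controlled by $C\psi^{-1/2}$ from \eqref{eq:induct0} and Lemma \ref{lem:s2}. Once that bookkeeping is done, the rescaled system is literally the same system treated in Proposition \ref{prop:lower}, and the rest is automatic.
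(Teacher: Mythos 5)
Your overall strategy---the parabolic box $Q$, the rescaling $T$, the function $f=\psi_3^{-1}\pa_x^k w$, extraction of a zeroth-order term, and Krylov's interior estimate---is exactly the paper's. The genuine gap is in the step you call ``automatic'': you verify only that the rescaled source $\psi_3^{1/2}R$ is bounded in $L^\infty(\widetilde{Q})$, but Theorem 8.11.1 in \cite{Kry} is an interior \emph{Schauder} estimate. To conclude sup bounds on $\pa_{\tilde{\psi}}^2 f$ (i.e. $|\pa_\psi^2\pa_x^k w|\le C\psi^{-1}$) and on $\pa_{\tilde{x}} f$ (i.e. $|\pa_x^{k+1}w|\le C\psi^{-\frac12}$) you need the source and the zeroth-order coefficient to be bounded in $\mathcal{C}^\alpha(\widetilde{Q})$, not merely in $L^\infty$; a bounded right-hand side alone does not give pointwise bounds on second derivatives. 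The bulk of the paper's proof is precisely this verification: it shows $|c|_{\mathcal{C}^\alpha(\widetilde{Q})}+|F|_{\mathcal{C}^\alpha(\widetilde{Q})}\le C$ by proving the rescaled gradient bounds $|\nabla_{\tilde{x},\tilde{\psi}}\pa_x^j\sqrt{w}|\le C\psi_3^{\frac12}$, $|\nabla_{\tilde{x},\tilde{\psi}}\pa_x^j(1/\sqrt{w})|\le C\psi_3^{-\frac12}$ and $|\nabla_{\tilde{x},\tilde{\psi}}\pa_\psi^2\pa_x^m w|\le C\psi_3^{-\frac12}$, the last via an explicit identity for $\pa_\psi(\pa_\psi^2\pa_x^m w)$, and these are only available for $m\le k-2$ from \eqref{eq:induct0}.

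This is also where your particular decomposition breaks down: you leave the $m=k-1$ term $C_k^{k-1}\frac{\pa_x w}{2\sqrt{w}}\pa_\psi^2\pa_x^{k-1}w$ inside $R$. Its sup norm is indeed $O(\psi^{-\frac12})$, but its H\"older norm on $\widetilde{Q}$ cannot be estimated from \eqref{eq:induct0}: a $\tilde{\psi}$-derivative of it produces $\pa_\psi^3\pa_x^{k-1}w$, equivalently (through the equation) $\pa_\psi\pa_x^{k}w$, which is one of the quantities the lemma is meant to control, so the argument becomes circular. The paper avoids this by rewriting $\pa_\psi^2\pa_x^{k-1}w=\frac{\pa_x^{k}w}{\sqrt{w}}+\cdots$ as well, which moves the extra piece $\frac{\pa_x w\,\pa_x^{k}w}{2w}$ into the zeroth-order coefficient (so that $c=-\frac{\psi_3^{3/2}}{2\sqrt{w}}\pa_\psi^2 w-\frac{\psi_3^{3/2}\pa_x w}{2w}$) and leaves in $F$ only terms with $\pa_\psi^2\pa_x^m w$, $m\le k-2$. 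So your plan needs two repairs: absorb the $m=k-1$ term into the zeroth-order part as the paper does, and prove $\mathcal{C}^\alpha$ (not just $L^\infty$) bounds on $c$ and $F$ before invoking the interior estimate; with those additions it coincides with the paper's proof.
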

\begin{proof}
By Lemma \ref{lem:s2} and \eqref{eq:induct0}, it holds that in $\big[\frac{15}{16}\epsilon,X\big]\times[0,\delta_3],$
\begin{align}\label{235}
|\pa_x^j w|\le C\psi,\quad |\pa_x^j\sqrt{ w}|\leq C\psi^{\frac{1}{2}},\,\,0\leq j\leq k.
\end{align}

For $(x_3,\psi_3)\in[\epsilon,X]\times (0,\delta_0]$ with $\delta_0=\min\big\{\frac{2}{3}\delta_3,\frac{\epsilon}{16}\big\}$, we denote
$$Q=\big\{x_3-\psi_3^{\frac{3}{2}}\leq x\leq x_3\big\}\times \Big\{\frac{1}{2}\psi_3\leq \psi\leq\frac{3}{2}\psi_3\Big\}.$$

By straight calculations, we obtain
 \begin{align*}
  \pa_x \pa_x^{k} w-\sqrt{w}\pa_\psi^2 \pa_x^{k} w=&-2\frac{d^{k+1}p}{dx^{k+1}}+\pa_x^{k}\sqrt{w}\pa_\psi^2  w+\sum_{m=1}^{k-2}C_k^m(\pa_x^{k-m}\sqrt{w})\pa_\psi^2 \pa_x^{m} w
  \\&+C_k^{k-1}\frac{\pa_x w}{2\sqrt{w}}\pa_\psi^2 \pa_x^{k-1} w.
 \end{align*}
 Since
  \begin{align*}\begin{split}
    \pa_\psi^2 \pa_x^{m} w&= \pa_x^{m}\pa_\psi^2w=\pa_x^{m}\Big(\frac{\pa_xw}{\sqrt{w}}+\frac{2}{\sqrt{w}}\frac{dp}{dx}\Big) \\& =\frac{\pa_x^{m+1}w}{\sqrt{w}}+\sum_{l=1}^{m}C_m^l\pa_x^{m-l+1}w\pa_x^l\frac{1}{\sqrt{w}}+\pa_x^{m}\Big(\frac{2}{\sqrt{w}}\frac{dp}{dx}\Big)
\end{split}
\end{align*}
and
\begin{align*}
    \pa_x^{k}\sqrt{w}=  \pa_x^{k-1}\frac{\pa_x w}{2\sqrt{w}}=
    \frac{\pa_x^{k} w}{2\sqrt{w}}+
    \sum_{l=1}^{k-1}C_{k-1}^{l}\pa_x^{k-1-l+1} w\pa_x^{l}\frac{1}{2\sqrt{w}},
\end{align*}
 we obtain
 \begin{align*}
  \pa_x \pa_x^{k} w-\sqrt{w}\pa_\psi^2 \pa_x^{k} w=&-2\frac{d^{k+1}p}{dx^{k+1}}+\sum_{m=1}^{k-2}C_k^m(\pa_x^{k-m}\sqrt{w})\pa_\psi^2 \pa_x^{m} w\\&+ \frac{\pa_x^{k} w}{2\sqrt{w}}\pa_\psi^2  w+
    \sum_{l=1}^{k-1}C_{k-1}^{l}\pa_x^{k-1-l+1} w\Big(\pa_x^{l}\frac{1}{2\sqrt{w}}\Big)\pa_\psi^2  w
 \\& +C_k^{k-1}\frac{\pa_x w\pa_x^{k}w}{2w}+C_k^{k-1}\frac{\pa_x w}{2\sqrt{w}}\Big[\sum_{l=1}^{k-1}C_{k-1}^{l}\pa_x^{k-l}w\pa_x^l\frac{1}{\sqrt{w}}+\pa_x^{k-1}\Big(\frac{2}{\sqrt{w}}\frac{dp}{dx}\Big)\Big].
 \end{align*}

Make a transformation $T:Q\to \widetilde{Q}=[-1,0]_{\tilde{x}}\times[-\frac{1}{2},\frac{1}{2}]_{\tilde{\psi}}$ via
\begin{align*}
    x-x_3=\psi_3^{\frac{3}{2}}\tilde{x},\quad \psi-\psi_3=\psi_3\tilde{\psi}.
\end{align*}
Let $f=\pa_x^{k} w\psi_3^{-1},$ which satisfies
 \begin{align*}
  \pa_{\tilde{x}} f-\frac{\sqrt{w}}{\psi_3^{\frac{1}{2}}}\pa_{\tilde{\psi}}^2 f+cf=&-2\frac{d^{k+1}p}{dx^{k+1}}\psi_3^{\frac{1}{2}}+\psi_3^{\frac{1}{2}}\sum_{m=1}^{k-2}C_k^m(\pa_x^{k-m}\sqrt{w})\pa_\psi^2 \pa_x^{m} w\\&+\psi_3^{\frac{1}{2}}
    \sum_{l=1}^{k-1}C_{k-1}^{l}\pa_x^{k-l} w\Big(\pa_x^{l}\frac{1}{2\sqrt{w}}\Big)\pa_\psi^2  w
 \\& +\psi_3^{\frac{1}{2}}\frac{\pa_x w}{2\sqrt{w}}\Big[\sum_{l=1}^{k-1}C_{k-1}^{l}\pa_x^{k-l}w\pa_x^l\frac{1}{\sqrt{w}}+\pa_x^{k-1}\Big(\frac{2}{\sqrt{w}}\frac{dp}{dx}\Big)\Big]=F,
 \end{align*}
where $c=-\frac{1}{2\sqrt{w}}\pa_\psi^2  w\psi_3^{\frac{3}{2}}-\frac{\pa_x w}{2w}\psi_3^{\frac{3}{2}}.$

By \eqref{235}, we have
\begin{align*}
|f|\leq C \quad \text{in} \quad [-1,0]_{\tilde{x}}\times \big[-\frac{1}{2},\frac{1}{2}\big]_{\tilde{\psi}}.
\end{align*}
From the proof of Proposition \ref{prop:lower}, we know that for $\al\in (0,1)$,
\begin{align*}
0<c\leq \frac{\sqrt{w}}{\psi_3^{\frac{1}{2}}}\leq C,\quad  \Big|\frac{\sqrt{w}}{\psi_3^{\frac{1}{2}}}\Big|_ {\mathcal{C}^\alpha([-1,0]_{\tilde{x}}\times[-\frac{1}{2},\frac{1}{2}]_{\tilde{\psi}})}\leq C.
\end{align*}
Using the induction assumption \eqref{eq:induct0} and \eqref{235}, we can deduce that for $ j\leq k-1$ and $m\leq k-2$,
\begin{align*}
   |\nabla_{\tilde{x},\tilde{\psi}}\pa_x^j \sqrt{w}|\leq C\psi_3^{\frac{1}{2}},\quad \big|\nabla_{\tilde{x},\tilde{\psi}}\pa_x^j (\frac{1}{\sqrt{w}})\big| \leq C\psi_3^{-\frac{1}{2}},\quad |\nabla_{\tilde{x},\tilde{\psi}}\pa_\psi^2 \pa_x^{m} w | \leq C\psi_3^{-\frac{1}{2}},
\end{align*}
where in the third inequality we need to use the fact that
 \begin{align*}
    \pa_\psi(\pa_\psi^2 \pa_x^{m} w ) =&\frac{\pa_\psi\pa_x^{m+1}w}{\sqrt{w}}-\frac{\pa_\psi w\pa_x^{m+1}w}{2(\sqrt{w})^3}+\sum_{l=1}^{m}C_m^l\pa_x^{m-l+1}\pa_\psi w\pa_x^l\frac{1}{\sqrt{w}}\\&+\sum_{l=1}^{m}C_m^l\pa_x^{m-l+1} w\pa_x^l\frac{\pa_\psi w}{-2(\sqrt{w})^3}+\pa_x^{m}\Big(\frac{\pa_\psi w}{-(\sqrt{w})^3}\frac{dp}{dx}\Big).
\end{align*}
These along with \eqref{psipsialpha} and \eqref{psichusqrtw} imply hat
 $$|c|_{\mathcal{C}^\alpha(\widetilde{Q})}+|F|_{\mathcal{C}^\alpha(\widetilde{Q})}\leq C.$$
Then  standard interior a priori estimates yield that
\beno
|\pa_{\tilde{x}}f|_ {{L}^\infty([-\frac{1}{4},0]_{\tilde{x}}\times[-\frac{1}{8},\frac{1}{8}]_{\tilde{\psi}})}+|\pa_{\tilde{\psi}}f|_ {{L}^\infty([-\frac{1}{4},0]_{\tilde{x}}\times[-\frac{1}{8},\frac{1}{8}]_{\tilde{\psi}})}+|\pa_{\tilde{\psi}}^2f|_ {{L}^\infty([-\frac{1}{4},0]_{\tilde{x}}\times[-\frac{1}{8},\frac{1}{8}]_{\tilde{\psi}})}\leq C.
\eeno
Especially,  this implies
\beno
|\pa_x^{k+1} w(x_3,\psi_3)|\leq C\psi_3^{-\frac{1}{2}},\quad|\pa_{\psi}\pa_x^kw(x_3,\psi_3)|\leq C,\quad|\pa_{\psi}^2\pa_x^kw(x_3,\psi_3)|\leq C\psi_3^{-1}.
\eeno
Since $(x_3,\psi_3)$ is arbitrarily, we deduce the desired result.
\end{proof}

\begin{corol}\label{cor:infty}
Let $0<\epsilon<X$ and integer $k\ge 0$, there exists $\delta>0$  so that in $[\epsilon,X]\times[0,\delta],$
\begin{align}
|\pa_\psi^m\pa_x^{k} w|\le C\psi^{1-m}.\label{eq:w-infty}
\end{align}
\end{corol}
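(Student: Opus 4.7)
The plan is to prove the bound by strong induction on $m$ (with $k$ free), using the Von Mises equation algebraically to trade a pair of $\psi$-derivatives for one $x$-derivative. For the base cases $m\in\{0,1,2\}$, the desired estimate is exactly what Proposition \ref{prop:high} supplies for $k\geq 2$, together with \eqref{xd}, Lemma \ref{lem:growth}, and Proposition \ref{prop:lower} for $k=0,1$ (the $m=2$, $k=0$ case comes from reading the PDE \eqref{eq:sPrandtl-VM} pointwise). Given a target $(m,k)$, I would choose $\delta>0$ small enough that these base bounds hold at every $x$-order actually used by the induction on $[\epsilon,X]\times[0,\delta]$; this $\delta$ may depend on $m$ and $k$, but the induction itself contributes no further shrinking.

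For the inductive step with $m\geq 3$, rewrite \eqref{eq:sPrandtl-VM} as $\pa_\psi^2 w=(\pa_x w+2\,dp/dx)/\sqrt{w}$, so
\[
\pa_\psi^m\pa_x^k w=\pa_\psi^{m-2}\pa_x^k\!\left(\frac{\pa_x w+2\,dp/dx}{\sqrt{w}}\right).
\]
Expand the right-hand side by the Leibniz rule in $(\psi,x)$. A generic term is
$\pa_\psi^a\pa_x^{b+1}w\cdot\pa_\psi^{m-2-a}\pa_x^{k-b}(1/\sqrt{w})$
with $0\leq a\leq m-2$ and $0\leq b\leq k$, plus analogous pressure terms in which the $\psi$-derivatives act only on $1/\sqrt{w}$. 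The first factor has $\psi$-order $a<m$, so the induction hypothesis yields $|\pa_\psi^a\pa_x^{b+1}w|\leq C\psi^{1-a}$. For the second factor I would apply Fa\`{a} di Bruno to $\phi(w)$ with $\phi(y)=y^{-1/2}$: each constituent has the form $\phi^{(r)}(w)\prod_i(\pa_\psi^{a_i}\pa_x^{b_i}w)^{n_i}$ subject to $\sum_i n_i a_i=m-2-a$ and $\sum_i n_i=r$, with every $a_i<m$, so the induction gives $\prod_i|\pa_\psi^{a_i}\pa_x^{b_i}w|^{n_i}\leq C\psi^{\sum_i n_i(1-a_i)}=C\psi^{r-(m-2-a)}$. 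Combining with $|\phi^{(r)}(w)|\leq C\psi^{-1/2-r}$ produces
\[
\bigl|\pa_\psi^{m-2-a}\pa_x^{k-b}(1/\sqrt{w})\bigr|\leq C\psi^{-1/2-(m-2-a)}.
\]

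Multiplying the two factors bounds each generic term by $C\psi^{5/2-m}$, and the pressure terms analogously by $C\psi^{3/2-m}$; both are dominated by the claimed $C\psi^{1-m}$ once $\psi\leq\delta\leq 1$ and $m\geq 3$, closing the induction. The only nontrivial point — and the main place where mistakes could enter — is the Fa\`{a} di Bruno bookkeeping: one must verify the exponent identity $\sum_i n_i(1-a_i)=r-(m-2-a)$ uniformly over every Fa\`{a} di Bruno multi-index, which is immediate from the two constraints $\sum_i n_i a_i=m-2-a$ and $\sum_i n_i=r$. Once this algebraic bookkeeping is in place, the entire argument is purely algebraic and requires no further maximum principle or domain reduction beyond what was needed for the base cases of Proposition \ref{prop:high}.
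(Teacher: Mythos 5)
Your proposal is correct and follows essentially the same route as the paper: both arguments induct on the $\psi$-derivative order and use \eqref{eq:sPrandtl-VM} to trade $\pa_\psi^2$ for $\pa_x$ at the cost of a factor $1/\sqrt{w}$, with the same base cases from \eqref{xd}, Lemma \ref{lem:growth}, Propositions \ref{prop:lower} and \ref{prop:high}. The only (harmless) difference is bookkeeping for $1/\sqrt{w}$: the paper carries the bound $|\pa_x^k\pa_\psi^m(1/\sqrt{w})|\le C\psi^{-\frac12-m}$ as a parallel induction hypothesis renewed through the identity $\pa_x^k\pa_\psi^{l+1}\big(w/ (\sqrt{w}\sqrt{w})\big)=0$, whereas you recover the same bound on the fly via Fa\`a di Bruno from the inductive estimates on $w$ alone, which your exponent identity $\sum_i n_i(1-a_i)=r-(m-2-a)$ indeed justifies.
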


\begin{proof}
By Lemma \ref{lem:growth}, \eqref{xd}, Proposition \ref{prop:lower} and Proposition \ref{prop:high}, we know that  \eqref{eq:w-infty} holds for $m=0,1,2.$ Moreover, we have
\begin{align*}
   \Big|\pa_x^k\frac{1}{\sqrt{w}}\Big|\leq C\psi^{-\frac{1}{2}},\quad \Big|\pa_x^k\pa_\psi\frac{1}{\sqrt{w}}\Big|\leq C\psi^{-\frac{3}{2}},\quad \Big|\pa_x^k \pa_\psi^2\frac{1}{\sqrt{w}}\Big|\leq C\psi^{-\frac{5}{2}}.
\end{align*}
Let us inductively assume that \eqref{eq:w-infty} and
 \begin{align}\label{as1sqrtw}
    \Big|\pa_x^k\pa_\psi^m\frac{1}{\sqrt{w}}\Big|\leq C\psi^{-\frac{1}{2}-m}
\end{align}
hold for $0\leq m\leq l$ with $l\geq1.$
Next we show that for $m=l+1,$ \eqref{eq:w-infty} and \eqref{as1sqrtw} still hold. Thanks to
\begin{align*}
\pa_\psi^{l+1}\pa_x^{k} w=&\pa_\psi^{l-1}\pa_x^{k}\pa_\psi^2 w=\pa_x^{k}\pa_\psi^{l-1}\Big(\frac{\pa_xw}{\sqrt{w}}+\frac{2}{\sqrt{w}}\frac{dp}{dx}\Big)\\
=&\pa_x^{k}\Big(\sum_{a=0}^{l-1}C_{l-1}^a\pa_\psi^{l-1-a}\pa_xw\pa_\psi^{a}\frac{1}{\sqrt{w}}+2\frac{dp}{dx}\pa_\psi^{l-1}\frac{1}{\sqrt{w}}\Big)
\end{align*}
and the induction assumption, we have
\begin{align}\label{psimximprov}
 |\pa_\psi^{l+1}\pa_x^{k} w|\leq C\psi^{\frac{1}{2}-l}.
\end{align}
A direct calculation gives
\begin{align*}
   0=& \pa_x^k\pa_\psi^{l+1}\Big(\frac{1}{\sqrt{w}}\frac{1}{\sqrt{w}}w\Big)\\
   =&\pa_x^k\Big[2\sqrt{w}\pa_\psi^{l+1}\frac{1}{\sqrt{w}}
   +\sum_{a=1}^l\sum_{j=0}^{l+1-a}C_{l+1}^aC_{l+1-a}^j(\pa_\psi^{a}\frac{1}{\sqrt{w}})(\pa_\psi^{j}\frac{1}{\sqrt{w}})\pa_\psi^{l+1-j-a}w
   \\&\quad+\sum_{j=0}^{l}C_{l+1}^j\frac{1}{\sqrt{w}}(\pa_\psi^{j}\frac{1}{\sqrt{w}})\pa_\psi^{l+1-j}w\Big],
\end{align*}
from which and the induction assumption, we can deduce that
\begin{align*}
 \Big|\pa_x^k\pa_\psi^{l+1}\frac{1}{\sqrt{w}}\Big|\leq C\psi^{-\frac{3}{2}-l}.
\end{align*}
This along with \eqref{psimximprov} gives the desired result.
\end{proof}

\section{Global $C^\infty$ regularity}

In this section, we prove Theorem \ref{thm:reg}. As we remark below Theorem \ref{thm:reg}, it is enough to prove the regularity of the solution on the domain $[\epsilon, X]\times[0, \delta]$ for some small $\delta>0$.

\begin{proof}
To make the notation clear, we write the Von Mises transformation as follows
$$(\tilde{x},\psi)=\Big(x,\int_{0}^{y}u\,dy\Big).$$
A direct calculation(or See P25 in \cite{Olei}) shows that
\begin{align*}
&\pa_y=\sqrt{w}\pa_\psi,\quad \pa_x=\pa_{\tilde{x}}+\pa_x\psi(x,y)\pa_\psi,\\
& \pa_x\psi=\frac{1}{2}\sqrt{w}\int_{0}^\psi w^{-\frac{3}{2}}\pa_{\tilde{x}}w\,d\psi.
\end{align*}
By   \eqref{xd} and Proposition \ref{prop:high}, we have
\ben
|\pa_x\psi|\le C\psi. \label{eq:psi}
\een
Thanks to  $\pa_y=\sqrt{w}\pa_\psi$, we find that
\begin{align*}
    \pa_x^k2\pa_y u=&(\pa_{\tilde{x}}+\pa_x\psi\pa_\psi)^k\pa_{\psi} w,\\
    \pa_x^k2\pa_y^2 u=&(\pa_{\tilde{x}}+\pa_x\psi\pa_\psi)^k\Big(\pa_{\tilde{x}} w+2\frac{dp}{dx}\Big)\\=&(\pa_{\tilde{x}}+\pa_x\psi\pa_\psi)^k(\pa_{\tilde{x}} w)+2\frac{d^{k+1}p}{d x^{k+1}}.
\end{align*}
Using \eqref{eq:psi} and Corollary \ref{cor:infty}, we can deduce that
\begin{align}\label{y2xku}
|\pa_x^k\pa_y u|+|\pa_x^k\pa_y^2 u|\leq C.
\end{align}
Assume that for $m\geq 1$ and any integer $k$,
\begin{align}\label{xlyju}
|\pa_x^k\pa_y^{j}u|\leq C,\quad j\leq m.
\end{align}
Using the fact that
\begin{align*}
   \pa_x^k\pa_y^{m+1}u=&\pa_x^k\pa_y^{m-1}\pa_{y}^2u=\pa_x^k\pa_y^{m-1}\Big(u\pa_x u +\pa_{y}u\int_0^y-\pa_x u\,dy\Big)\\=&\pa_x^k\Big(\sum_{a=0}^{m-1}C_{m-1}^a\pa_y^{m-1-a}u\pa_y^{a}\pa_x u-
   \sum_{a=0}^{m-2}C_{m-1}^{a+1}\pa_y^{m-1-a}u\pa_y^{a}\pa_x u +\pa_{y}^mu\int_0^y-\pa_x u\,dy\Big),
\end{align*}
we can deduce from  \eqref{y2xku} and \eqref{xlyju} that
$$|\pa_x^k\pa_y^{j}u|\leq C, \quad j\leq m+1.$$

This proves the theorem.
\end{proof}

\section*{Acknowledgments}

The authors thank Professor Yan Guo for helpful discussions. Y. Wang is partially supported by Beijing Advanced Innovation Center for Imaging Theory and Technology and key research project of the Academy for Multidisciplinary Studies, Capital Normal University.
\medskip

 \end{document}